\newtheorem{theorem}{Theorem}[section]
\newtheorem{lemma}{Lemma}[section]
\newtheorem{prop}{Proposition}[section]
\newtheorem{cor}{Corollary}[section]
\theoremstyle{definition}
\newcommand{\bggo}{\mathcal O}
\newcommand{\mf}[1]{\displaystyle{\mathfrak{#1}}}
\DeclareMathOperator{\spec}{\ensuremath{Spec}}
\DeclareMathOperator{\Gr}{\ensuremath{gr}}
\DeclareMathOperator{\ad}{\ensuremath{ad}}
\DeclareMathOperator{\Sym}{\ensuremath{Sym}}
\long\def\symbolfootnote[#1]#2{\begingroup\def\thefootnote{\fnsymbol{footnote}}
\footnote[#1]{#2}\endgroup}
\begin{document}

\title{Infinitesimal Hecke algebras of $\mathfrak{sl}_2$ in positive characteristic}
\author{ Akaki Tikaradze    }

\address{The University of Toledo \hfill\newline Department of Mathematics
\hfill\newline Toledo, Ohio, USA \hfill\newline
e-mail: {\tt tikar@math.uchicago.edu}}

\thanks{We would like to thank V. Ginzburg for suggesting the question, and A. Premet, I. Gordon for useful remarks.}
\begin{abstract}
We describe centers of infinitesimal Hecke algebra of $\mf{sl_2}$ in positive characteristic. In particular,
we show that these algebras are finitely generated  modules
over their centers, and the Azumaya and smooth loci of the centers coincide.
 
\end{abstract}

\maketitle
\section{Introduction}


In this paper, we  will work over an algebraically closed field $k$ of characteristic $p>2$. In this setting, we will  consider infinitesimal  Hecke algebras associated to the Lie algebra
$\mf{sl}_2$ and its natural representation (see [EGG] for the definition of infinitesimal Hecke algebras associated to an arbitrary reductive Lie algebra and its representation). We will be concerned with their centers and  
finite dimensional representations. Let us recall the precise definition of these algebra.

   Let $V$ be the natural 2-dimensional representation of $\mf{g}=\mf{sl}_2.$ Let us fix
   its basis elements $x, y$ so that $ ex=0, fx=y, hx=x, hy=-y,$ here $e, f, h$ denote the standard basis elements of $\mathfrak{sl}_2$. Then for any $z\in k[\Delta],$ where $\Delta$ denotes the rescaled Casimir element $h^2+4ef-2h$, one defines an algebra $H_z$ as the quotient of $\mf{U}\mf{g}\ltimes TV$ by the two sided ideal generated by the element $[x, y]-z,$ where $TV$ denotes the tensor algebra of $V$. 
Thus we get a family of algebras parametrized by elements in $k[\Delta]$. The algebra $H_z$  can be equipped with a natural algebra filtration $F^nH_z$ such that $\mf{U}\mf{g}= F^0H_z, \mf{U}\mf{g}V= F^1H_z$ and $F^nH_z=(F^1H_z)^n$. The main property of $H_z$ is that it satisfies the PBW property, namely the  natural surjection $H_0=\mf{U}(\mf{g}\ltimes V)\to \Gr(H_z)$ is an isomorphism ([K],[EGG]). Thus one can think of algebras $H_z$ as some kind of
deformations of the enveloping algebra $H=H_0$ depending on a deformation parameter $z\in k[\Delta]$. 

When the ground field $k$ has characteristic 0, we proved in [T] that the center of $H_z$ is isomorphic to the polynomial ring in one variable and
 $\Gr(\mf{Z}(H_z))=\mf{Z}(\Gr(H_z))=\mf{Z}(H)$ (where the letter $\mf{Z}$ stands for the center of an algebra). In this paper we show that if deg($z)<p-1,$ then algebra $H_z$ is a finitely generated module over its center
and just like in the characteristic 0 case $\Gr(\mf{Z}(H_z))=\mf{Z}(\Gr(H_z))=\mf{Z}(H).$ 
In this way we have a similar picture to the case of symplectic reflection algebras [BFG].  Geometrically $\spec \mf{Z}(H_z)$ turns out to be a $p$-fold ramified cover of the affine space $\mathbb{A}^5.$ At the end of the paper, we discuss irreducible modules of $H_z.$ In particular, we prove that the Azumaya and the smooth loci of $\mf{Z}(H_z)$ coincide. Analogous statements are known to be true for Cherednik algebras [BC], enveloping algebras of semi-simple
Lie algebras and quantized enveloping algebras [BG]. In the special case of $z=0,$ we describe all irreducible modules of $H$.

\section{Center and irreducible representations}

 Throughout this paper, we will use the term 'maximal vector' for elements of representations of $\mf{g}$ annihilated by $e$. Let us recall the following computation from [T] which we are going  to use later. For any $\omega \in k[\Delta]$ we have   
      $$[\omega, x]= (F(\omega)h+G(\omega))x+2eF(\omega)y.$$ 
Where $F, G:k[\Delta]\to k[\Delta]$ are linear endomorphisms of $k[\Delta]$ defined recursively as follows:
\begin{eqnarray*}
F(\Delta^{n+1})&=& 2\Delta^n+(\Delta-1)F(\Delta^n)-2G(\Delta^n)\\ 
G(\Delta^{n+1})&=&-3\Delta^n+(\Delta+3)G(\Delta^n)-2\Delta F(\Delta^n).
\end{eqnarray*}
 It is immediate that deg$F(\Delta^n)=$deg$G(\Delta^n)=n-1$ and leading coefficient of $F(\Delta^n)$ is $2n$, and the leading coefficient of $G(\Delta^n)$ is $-n(2n+1)$. When char($k$)=0, it was shown in [T] that center of $H_z$ is generated by the element $t_z=ex^2+hxy-fx^2-\frac{1}{2}hz-\omega_z$, where $\omega_z=-F^{-1}(z)+\frac{1}{2}z+\frac{1}{2}F^{-1}(G(z))$. We see from this formula that the element $t_z$ can be defined  in positive characteristic setting as long as char$(k)<p-1.$

We will also use the following anti-involution $j$ of $H_z$ defined as follows: $$j(x)=y, j(y)=x, j(h)=h, j(e)=-f, j(f)=-e.$$  Main result of the paper is the following

\begin{theorem}If deg $z<p-1$, then the center of $H_z$ is generated as an algebra over $k$ by $e^p, f^p, h^p-h, x_p, y_p,t_z$, where $x_p$ has top symbol with respect to the filtration equal to $x^p$, and $y_p$ has $y^p$. 
$\spec(\mf{Z}(H_z))$ is a finite (ramified) cover of $\mathbb{A}^5$=$\spec(k[e^p,f^p, h^p-h, x_p, y_p]$) of degree $p$.

\end{theorem}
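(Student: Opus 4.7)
The strategy is to exploit the PBW isomorphism $\Gr(H_z) \cong H_0 = \mf{U}(\mf{g} \ltimes V)$ together with the classical description of $\mf{Z}(H_0)$ in characteristic $p$: it is generated by the $p$-center elements $e^p, f^p, h^p-h, x^p, y^p$ of the restricted enveloping algebra together with a single Casimir-type invariant whose image coincides with the top symbol of $t_z$. The plan is to construct central lifts in $H_z$ of each of these generators, and then to deduce generation of $\mf{Z}(H_z)$ from the inclusion $\Gr(\mf{Z}(H_z)) \subseteq \mf{Z}(\Gr(H_z)) = \mf{Z}(H_0)$ together with an induction on filtration degree.

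Centrality of $e^p, f^p, h^p-h$ in $H_z$ follows by direct computation: since $\ad(e)^2(y) = \ad(f)^2(x) = 0$, one has $[e^p, y] = p\,e^{p-1}x = 0$ and $[f^p, x] = p\,f^{p-1}y = 0$, while the Frobenius identity $(h+1)^p = h^p + 1$ yields $[h^p - h, x] = 0$ (and similarly for $y$); furthermore each of these commutes with $z \in k[\Delta]$ because $\Delta$ is $\mf{U}\mf{g}$-central, so the relation $[x,y] = z$ creates no extra obstruction. Centrality of $t_z$ is obtained by transferring the characteristic-0 argument of [T] verbatim: that argument uses only the invertibility of $F$ on the span of $\Delta^0, \ldots, \Delta^{\deg z}$, which by the leading-coefficient formula $F(\Delta^n) = 2n\Delta^{n-1} + \cdots$ holds precisely when $\deg z < p-1$, so $\omega_z$ is well-defined and the commutator computation carries through. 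The nontrivial new construction is that of $x_p$: in $H_0$ the element $x^p$ is central (since $V$ is abelian and $\ad(g)^p$ annihilates $V$ for each $g \in \mf{g}$), but in $H_z$ the relation $[x,y] = z$ obstructs centrality. Direct calculation shows nonetheless that the commutators $[x^p, -]$ lie in strictly lower filtration degree than expected, with leading symbols inside $\mf{Z}(H_0)$. I build $x_p$ by successive approximation starting from $x^p$: at each stage the leading symbol of the obstruction lies in $\mf{Z}(H_0)$ and can therefore be cancelled by subtracting a polynomial in the previously constructed central lifts of lower-filtration generators; the filtration of the remaining obstruction strictly decreases at each step, ensuring termination. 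Setting $y_p := j(x_p)$ then gives the required central lift of $y^p$, since the anti-involution $j$ preserves $\mf{Z}(H_z)$.

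Since the top symbols of $e^p, f^p, h^p - h, x_p, y_p, t_z$ generate $\mf{Z}(H_0)$ as a $k$-algebra, a standard induction on filtration degree combined with $\Gr(\mf{Z}(H_z)) \subseteq \mf{Z}(H_0)$ shows that these elements generate $\mf{Z}(H_z)$, proving the first assertion. For the second assertion, algebraic independence of $e^p, f^p, h^p - h, x_p, y_p$ over $k$ is inherited from algebraic independence of their top symbols in $H_0$, yielding an embedding of polynomial rings $k[e^p, f^p, h^p - h, x_p, y_p] \cong k[\mathbb{A}^5] \hookrightarrow \mf{Z}(H_z)$; the degree-$p$ cover structure then follows from exhibiting a monic polynomial of degree $p$ satisfied by $t_z$ over this subring, mirroring the relation in $\mf{Z}(H_0)$ that expresses the $p$-th power of the Casimir symbol in terms of the five $p$-center generators. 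The main obstacle is the construction of $x_p$: at each approximation step one must verify both that the leading symbol of the obstruction to centrality lies in the span of symbols already accounted for, and that the correction strictly reduces the filtration of the obstruction. This bookkeeping is the essential new ingredient compared to [T] and relies on the explicit form of $[x,y] = z$ together with careful tracking of commutators in the filtered algebra.
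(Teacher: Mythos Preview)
Your outline tracks the paper's overall architecture (compute $\mf{Z}(H_0)$, lift each generator, deduce generation via $\Gr(\mf{Z}(H_z))\subseteq\mf{Z}(H_0)$, then read off the degree-$p$ cover), and the parts concerning $e^p,f^p,h^p-h,t_z$ and $y_p:=j(x_p)$ are fine. But the heart of the theorem is the existence of $x_p$, and here your argument has a genuine gap.

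You write that you build $x_p$ ``by successive approximation starting from $x^p$: at each stage the leading symbol of the obstruction lies in $\mf{Z}(H_0)$ and can therefore be cancelled by subtracting a polynomial in the previously constructed central lifts of lower-filtration generators.'' This cannot work as stated. If $c$ is central then $[x^p-c,\,\cdot\,]=[x^p,\,\cdot\,]$, so subtracting central elements does nothing to the obstruction. What one actually needs is an element $A\in F^{p-1}H_z$, \emph{not} central, with $[A,\Delta]=[x^p,\Delta]$ (after which commutation with $e,h$ forces commutation with $f$, and then with $y$). There is no general filtered-algebra mechanism producing such an $A$; the inclusion $\Gr(\mf{Z}(H_z))\subseteq\mf{Z}(H_0)$ only goes one way, and lifting a given central symbol is a genuine obstruction problem. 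Nor is there any reason the top symbol of $[x^p,\Delta]$ should land in $\mf{Z}(H_0)$: in fact $[x^p,\Delta]=-4e(\ad x)^{p-2}(z)$, which is a maximal vector but certainly not central in $H_0$.

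The paper's construction of $x_p$ is accordingly much more hands-on. It first classifies homogeneous maximal vectors in $F^{p-1}H$ (Proposition~2.2), showing that any such vector decomposes as $[A,\Delta]+B+C$ with $A,B$ in an explicit $R[t_z,\Delta,e,x]$-span and $C$ of a restricted shape. Applying this to $[x^p,\Delta]$ and to $[y^p,\Delta]$, and then playing the two decompositions off against each other via the anti-involution $j$ together with the identity $e^{-1}[x^p,\Delta]=-(\ad e)^{p-2}\bigl(f^{-1}[y^p,\Delta]\bigr)$, one forces $B=C=0$ and shows the $\Delta$-dependence in $A$ disappears (Lemmas~2.6--2.8). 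None of this is captured by a generic ``correct the leading term'' recursion; the key inputs are the maximal-vector classification and the interplay between $j$ and $(\ad e)^{p-2}$. A secondary point: you cite the description of $\mf{Z}(H_0)$ as ``classical,'' but the paper in fact proves it (Lemmas~2.1--2.2 and the surrounding argument), and that proof already requires nontrivial work on maximal vectors which feeds into the $x_p$ construction.
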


   The proof will be divided into several steps. As a first step, we will show that the center of $H$ (the associated graded of $H_z$) is generated by $e^p, h^p-h, f^p, x^p, y^p, t$ (where $t=t_0=ex^2+hxy-fx^2$).  
 
 	We will argue by induction on the filtration degree of the central element. The first thing that needs to be checked is that \\$\mf{Z}(H)\cap \mf{U}\mf{g}=k[e^p, f^p, h^p-h]$. Since $\mf{Z}(\mf{U}\mf{g})=k[e^p,f^p, h^p-h, \Delta]$ [V],  any  $a\in \mf{Z}(H)\cap \mf{U}\mf{g}$ may be expressed as $a= \sum_{i<p} \alpha_i\Delta^i,$ where $ \alpha_i\in k[e^p, f^p, h^p-h]$. We have  $$0=[a,x]=\sum \alpha_i [\Delta^i,x],$$ thus $\sum \alpha_iF(\Delta^i)=0$ (recall that $F$ is the linear endomorphism of $k[\Delta]$ from the beginning.) Since $F(\Delta^i)$ has a degree $i-1$  (with leading coefficient $2i$), we get that  $\alpha_i=0$ for $i>0$ since elements $1,\Delta, ..., \Delta^{p-1}$ are linearly independent over $k[h^p-h,e^p,f^p]$, therefore $a\in k[h^p-h, f^p, e^p]$.

Now, we will deal with central elements of $H$ of positive degree in $x, y$. Let us begin by  making some preliminary remarks about homogeneous elements of $H$ that commute with $e$ and $y$.
Let $a$ be such an element. We may write $$a=(\sum_{i=0}^n b_iy^ix^{n-i})x^m, b_i\in \mf{U}\mf{g},b_n\neq 0.$$ From $[e,a]=0$ we get that $ib_i=-[e,b_{i-1}]$. In particular, $[e,b_n]=0$. Also from 
 $[a,y]=0$ we get that $[b_n,y]$ contains no elements from $\mf{U}\mf{g}y$ in its PBW monomial expression. Now we have the following 
\begin{lemma} Let $\alpha\in \mf{U}\mf{g}$ be an element which commutes with $e$, such that $[\alpha, y]$ has no terms from $\mf{U}\mf{g}y$ in its PBW monomial expansion. Then $\alpha$ belongs to $k[h^p-h, f^p, e]$
\end{lemma}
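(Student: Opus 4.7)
The plan is to expand $\alpha$ in the PBW basis $f^a h^b e^c$, writing $\alpha = \sum_{a,b,c}\lambda_{a,b,c}\,f^a h^b e^c$, and for each pair $(A,C)$ to bundle the $h$-coefficients into
\[
g_{A,C}(h) \;:=\; \sum_b \lambda_{A,b,C}\,h^b \;\in\; k[h].
\]
Each of the two hypotheses will translate into a condition on the family $\{g_{A,C}\}$, and combining them will pin down $\alpha$.

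First I would translate the $y$-hypothesis. Using $yf=fy$, $yh=(h+1)y$, and $ye=ey-x$ in $H_z$, one finds
\[
[f^a h^b e^c,y] \;=\; f^a\bigl(h^b-(h+1)^b\bigr)e^c\,y \;+\; c\,f^a(h+1)^b e^{c-1}\,x.
\]
The $\mf{U}\mf{g}\cdot y$-component of $[\alpha,y]$ therefore vanishes precisely when $g_{A,C}(h+1)=g_{A,C}(h)$ for every $A,C$. A short check (using $1,h,\ldots,h^{p-1}$ as a $k[h^p-h]$-basis of $k[h]$ and solving the resulting triangular system) identifies the polynomials in $k[h]$ fixed by $h\mapsto h+1$ with $k[h^p-h]$; hence $g_{A,C}\in k[h^p-h]$ for all $A,C$.

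Next I would translate the $e$-hypothesis. Using $ef^a=f^a e + a f^{a-1}(h-a+1)$ and $eh^b=(h-2)^b e$, one obtains
\[
[f^a h^b e^c,e] \;=\; f^a\bigl(h^b-(h-2)^b\bigr)e^{c+1} \;-\; a\,f^{a-1}h^b(h-a+1)\,e^c.
\]
Collecting coefficients of the PBW monomial $f^A h^B e^C$ and equating the resulting polynomial in $h$ to zero converts $[\alpha,e]=0$ into
\[
g_{A,C-1}(h)-g_{A,C-1}(h-2)\;=\;(A+1)(h-A)\,g_{A+1,C}(h) \qquad (\star)
\]
for all $A,C$.

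The finishing touch uses the characteristic-$p$ coincidence $2^p=2$, which gives $(h-2)^p-(h-2)=h^p-h$; thus every $g\in k[h^p-h]$ automatically satisfies $g(h)=g(h-2)$. Plugging the conclusion of the first step into $(\star)$ kills the left-hand side, leaving $(A+1)(h-A)\,g_{A+1,C}(h)=0$ in the integral domain $k[h]$. Since $h-A\ne 0$, this forces $g_{A+1,C}=0$ whenever $A+1\not\equiv 0\pmod p$; equivalently, $g_{A,C}=0$ unless $A\equiv 0\pmod p$. For each surviving $A=pA'$ we have $f^A=(f^p)^{A'}$, and $g_{A,C}\in k[h^p-h]$, so, $f^p$ and $h^p-h$ being central, reassembling yields $\alpha\in k[h^p-h,f^p,e]$. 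The commutator computations in the first two steps are routine but demand careful bookkeeping; the only genuinely non-mechanical ingredient — and the step I expect to be the crux — is the cancellation $g(h)=g(h-2)$ for $g\in k[h^p-h]$, without which $\Delta$-dependent elements (such as $\Delta$ itself, which does commute with $e$) could not be excluded from the centralizer.
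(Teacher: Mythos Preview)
Your proof is correct and follows the same overall strategy as the paper's: first use the $y$-condition to force the $h$-dependence into $k[h^p-h]$, then use the $e$-condition to force the $f$-dependence into $k[f^p]$. The executions diverge in the second step. Having established $g_{A,C}\in k[h^p-h]$, the paper reorganizes $\alpha$ as $\sum_i (h^p-h)^i\beta_i$ with $\beta_i\in k[f,e]$, argues by comparing $h$-degrees that $[e,\alpha]=0$ forces each $[e,\beta_i]=0$ separately, and then shows (by inspecting the $h$-term in $[e,f^j]$) that an element of $k[f,e]$ commuting with $e$ must lie in $k[f^p,e]$. You instead keep the coefficient polynomials $g_{A,C}$ throughout and derive the single recursion $(\star)$; the observation that $g\in k[h^p-h]$ automatically satisfies $g(h-2)=g(h)$ (via $2^p=2$) then kills the left side of $(\star)$ in one stroke, immediately yielding $g_{A,C}=0$ for $A\not\equiv 0\pmod p$. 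Your route is tighter and makes the characteristic-$p$ mechanism explicit; the paper's route is more modular, fully decoupling the $h$-analysis from the $f$-analysis before treating each, at the cost of a slightly more delicate $h$-degree argument.
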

\begin{proof}
We will argue by induction on the filtration degree (with respect to the standard filtration of $\mf{U}\mf{g}$). Let  $\alpha\in \mf{U}\mf{g}$ be an element which satisfies the conditions of the lemma. Let us write $$\alpha=\sum f^i\alpha_i, \alpha_i\in k[h,e]$$ exactly in this order, meaning that $h$ is on the left, $e$ is on the right.
Then $[\alpha, y]$ has no $y$ if and only if $[\alpha_i,y]$ has no $y$. Let $$\beta=\sum a_{ij}h^ie^j, a_{ij}\in k$$ be such that $[\beta,y]$ has no $y$. Then $$[\beta,y]=\sum j a_{ij}h^ie^{j-1}x+\sum a_{ij}[h^i,y]e^j,$$ and $[h^i,y]=(-ih^{i-1} + $ lower powers of $h)y$, therefore $[h^i,y]e^j=-ih^{i-1}e^jy +$ terms with lower powers of $h$. Hence the highest $i$ with $a_{ij}\neq 0$ is a multiple of $p$. Then considering the element
 $\beta-(h^p-h)^{i/p}a_{ij}e^j$, we see that $\beta$ may be written as a linear combination of elements of the form $(h^p-h)
^ie^j$. Therefore $$\alpha=\sum _{i=0}^m (h^p-h)^i\beta_i, \beta_i\in k[f,e],$$  where $f$ is on the left from $e$. Then $$[e,\alpha]=0=\sum (h^p-h)^i[e,\beta_i]$$ This implies that $[e,\beta_k]=0$. Indeed, looking at the terms with the highest power of $h$, it will equal to $h^{pk}$ (term with the highest power of $h$ in $[e,\beta_k]$) (where we right $h$ on the left in the PBW expression of $[e,\beta_k]$). Thus, each $\beta_i$ commutes with $e$. Let us write $$\beta_i=\sum_{j=0}^m f^jg_j, g_j\in k[e].$$ Assume without loss of generality that $m< p.$ Now $[e,\beta_i]$ will contain $hf^{m-1}g_m$, and this  forces $m=0$.

\end{proof}
Applying the anti-involution $j,$ we get an analogous result for $f$ and $x$. 

Let $a$ as above be a homogeneous central element, we claim that $m$ is a multiple of $p$. Indeed, from $0=[f,a]$, looking at highest power of $y$, it is clear that $b_ny^{n+1}mx^{m-1}=0.$ hence $p$ divides $m$, so without loss of generality we may assume that $m=0$.

  Let us write $n=pl+m$ (not to be confused with the old $m$), $0\leq m<p$. From the above lemma, we conclude that $b_n=\alpha e^k, b_0=\alpha'f^k$ for some $\alpha, \alpha'\in k[e^p,f^p, h^p-h], k<p$. We also have $(\ad e)^{m+1}f^k=0$, but $m=2k$mod$p$ (since $a$ has a weight 0 with respect to $h$), therefore either $m=2k$, or $m=2k-p$. The latter case is impossible since then $m<k$, which may not happen by the Lemma below. Thus $m=2k$. Now consider the element $a-t^m\alpha(x^p)^l$. this is a central element divisible by $y$. Hence, proceeding by induction, we are done.

The following is well-known ([St]), but we include the proof for the sake of completeness.
\begin{lemma}
If $k<p$, then $(\ad e)^k(f^k)\neq 0.$

\end{lemma}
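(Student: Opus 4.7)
The idea is to exhibit a finite-dimensional $\mf{sl}_2$-module on which $(\ad e)^k(f^k)$ acts nontrivially. Since $k<p$, the $(k+1)$-dimensional symmetric power $V_k=\Sym^k V$ of the standard representation is a well-defined $\mf{sl}_2$-module in characteristic $p$. Let $v\in V_k$ be its highest-weight vector, so that $ev=0$ and $hv=kv$.

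First, I would expand the iterated adjoint in $\mf{U}\mf{g}$ as
\[(\ad e)^k(f^k)=\sum_{i=0}^{k}(-1)^{k-i}\binom{k}{i}\,e^i f^k e^{k-i},\]
and observe that applying this to $v$ kills every summand with $k-i>0$, because $ev=0$. Only the term $e^k f^k v$ survives, so it is enough to show $e^k f^k v\neq 0$.

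Next, using the characteristic-free identity $[e,f^j]=jf^{j-1}(h-j+1)$ together with $ev=0$ and $hv=kv$, a routine induction on $j$ yields the standard ladder formula $e(f^j v)=j(k-j+1)\,f^{j-1}v$ for $1\le j\le k$. Iterating this $k$ times gives $e^k f^k v=\prod_{j=1}^{k}j(k-j+1)\cdot v=(k!)^2\,v$. Since $k<p$, the factor $(k!)^2$ is invertible in the base field, so $(\ad e)^k(f^k)\cdot v\neq 0$ and hence $(\ad e)^k(f^k)\neq 0$ in $\mf{U}\mf{g}$.

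There is really no obstacle here: the whole argument is the classical representation-theoretic computation for $\mf{sl}_2$, which transfers verbatim to positive characteristic as long as $k<p$ keeps $k!$ a unit. The sharpness of the bound is reflected in the fact that $k!\equiv 0 \pmod p$ for $k\ge p$, and indeed the statement fails beyond the stated range.
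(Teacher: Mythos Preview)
Your argument is correct. The binomial expansion of $(\ad e)^k$ via commuting left/right multiplications, the identity $[e,f^j]=jf^{j-1}(h-j+1)$, and the ladder computation $e^kf^kv=(k!)^2v$ on the highest-weight vector of $\Sym^kV$ are all valid in characteristic $p$ for $k<p$, and the conclusion follows since $(k!)^2$ is a unit.

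The paper takes a different, purely PBW-based route: it simply observes that when $(\ad e)^k(f^k)$ is expanded in the ordered monomials in $h,e,f$, every term has filtration degree at most $k$ and the monomial $h^k$ occurs with coefficient $k!$; since $k<p$ this coefficient is nonzero, so the element cannot vanish. Your approach instead exhibits a module on which the element acts nontrivially, which has the advantage of producing the explicit eigenvalue $(k!)^2$ and of being the standard $\mf{sl}_2$ ladder calculation. The paper's approach avoids introducing any auxiliary module and reads off nonvanishing from a single PBW coefficient, which is marginally shorter but carries less structural information. Both are elementary and self-contained; neither is clearly preferable.
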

\begin{proof}
 If we write $(\ad e)^k(f^k)$ as a sum of monomials in the PBW basis $h,e,f$ then each term will have a degree at most $k$, and it will have a term $k!h^k$ in it. Therefore it cannot be 0.

\end{proof}
For the sake of brevity, let us denote the ring $k[e^p, h^p-h, f^p]$ by $R$. So far we have proved that $\mf{Z}(H)=R[x^p, y^p, t]$. Now we turn to the case of a nonzero parameter $z$. Using the well-known identity $(\ad a)^p=\ad (a^p$), we see that elements $e^p, h^p-h, f^p, t_z$ lie in the center of $H_z$. We have a natural injection gr($\mf{Z}(H_z))\to\mf{Z}(H)$ and we have to show that it is actually an isomorphism, so it remains to demonstrate the existence of $x_p, y_p\in\mf{Z}(H_z)$ such that they map to $x^p, y^p$ respectively. By virtue of the anti-involution $j$, it will suffice to prove the existence of $x_p$. We will prove this by analyzing maximal vectors in $H$ (with respect to the adjoint action of ad($\mf{g}$)) and manipulating the anti-involution $j$. In a sense, the proof is similar in spirit to the proof of the existence of $t_z$ in characteristic 0 [T].

Let us start by establishing certain facts about $\mf{U}\mf{g}$ which will be used in a crucial way later (These facts may be  very well-known but we could not find them in the literature).
\begin{lemma} Let $a\in\mf{U}\mf{g}$ be a maximal vector. Then $a$ lies in $R[\Delta, e]$
\end{lemma}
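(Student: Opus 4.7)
The plan is to induct on the PBW filtration degree of $a$; the base case $\deg a=0$ is trivial. For the inductive step, pass to the top symbol $\bar a\in\Sym(\mathfrak g)=k[e,f,h]$, which satisfies $\{e,\bar a\}=0$ for the Kirillov--Kostant Poisson bracket, i.e.\ $h\,\partial_f\bar a=2e\,\partial_h\bar a$. If I can show that $\bar a$ lies in the subring $k[e,\Delta_0,f^p,h^p]$, where $\Delta_0=h^2+4ef$ is the top symbol of $\Delta$, then I lift $\bar a$ to
\[ b \;=\; \sum_\alpha c_\alpha\, e^{i_\alpha}\,\Delta^{j_\alpha}\,(f^p)^{k_\alpha}\,(h^p-h)^{l_\alpha}\;\in\; R[\Delta,e]; \]
since $e^p,f^p,h^p-h,\Delta$ are all central in $\mf{U}\mf{g}$ and $h^p-h$ has top symbol $h^p$, the element $b$ commutes with $e$ and has the same top symbol as $a$. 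Then $a-b\in\ker(\ad e)$ has strictly smaller filtration degree, and the induction closes.

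Everything reduces to the sub-claim that the Poisson centralizer of $e$ in $k[e,f,h]$ is exactly $k[e,\Delta_0,f^p,h^p]$. To prove it, localize at $e$ and introduce new coordinates $(e,u,h)$ on $k[e^{\pm 1},f,h]$ with $u=\Delta_0$ (so $f=(u-h^2)/(4e)$). The chain rule turns $\{e,-\}=h\partial_f-2e\partial_h$ into $-2e\,\partial_h$ at fixed $(e,u)$, so its kernel in $k[e^{\pm 1},u,h]$ is $k[e^{\pm 1},u,h^p]$ in characteristic $p$. The Frobenius identity $u^p=(h^p)^2+4e^pf^p$ (valid since $4^p=4$ in $k$) expresses $f^p$ inside $k[e^{\pm 1},u,h^p]$, so $k[e,u,f^p,h^p]_e=k[e^{\pm 1},u,h^p]$. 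It then remains to check that $k[e,u,f^p,h^p]$ is $e$-saturated in $k[e,f,h]$: given $e\beta\in k[e,u,f^p,h^p]$, write it uniquely as $e\beta=\sum_{0\le m<p}u^m\gamma_m(e,f^p,h^p)$ using the relation $u^p=(h^p)^2+4e^pf^p$, reduce modulo $e$, and note that $h^{2m}$ for $m=0,\ldots,p-1$ are $k[f^p,h^p]$-linearly independent in $k[f,h]$ to conclude $\gamma_m(0,f^p,h^p)=0$; hence each $\gamma_m$ is divisible by $e$ and $\beta\in k[e,u,f^p,h^p]$.

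The main obstacle is the sub-claim on Poisson centralizers, particularly the final saturation step: reconciling the localized kernel $k[e^{\pm 1},u,h^p]$ with the integral form $k[e,u,f^p,h^p]\subset k[e,f,h]$. This is where the Frobenius identity for $u^p$ and the $e$-torsion-freeness argument combine. Once the sub-claim is secured, the lift to $\mf{U}\mf{g}$ and the induction are essentially formal, thanks to the centrality of $e^p,f^p,h^p-h,\Delta$.
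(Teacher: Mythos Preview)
Your argument is correct and takes a genuinely different route from the paper's proof. The paper works directly in $\mf{U}\mf{g}$: after reducing to the case where $a$ is $\ad(h)$-homogeneous and not left-divisible by $e$, it writes $a$ in the PBW order $e^ig_i(h)f^j$ and inspects the term with the highest power of $e$; the vanishing of $[e,a]$ forces the weight to be zero and the top coefficient $g_m(h)$ to lie in $k[h^p-h]$, after which one subtracts $g_m(h)(\tfrac14\Delta)^m$ and inducts on $m$. Your approach instead pushes the whole question to the associated graded: you identify the Poisson centralizer of $e$ in $\Sym\mf{g}=k[e,f,h]$ via the coordinate change $u=\Delta_0$, the localization at $e$, the Frobenius identity $u^p=(h^p)^2+4e^pf^p$, and the $e$-saturation argument, and then lift back using the centrality of $f^p,h^p-h,\Delta$. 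The paper's argument is shorter and entirely elementary, staying inside $\mf{U}\mf{g}$ with no localization or change of variables. Your argument is longer but more structural: it cleanly separates the commutative (Poisson) computation from the lifting step, and the saturation/localization technique you use is a robust template that would adapt to other situations where the Poisson centralizer is easier to compute after inverting a coordinate. Both proofs ultimately hinge on the same induction on filtration degree, but the mechanism for handling the top piece is quite different.
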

\begin{proof}
We may assume that $a$ is homogeneous with respect to ad$(h)$, say of weight $n$ and is not divisible by $e$ from the left. Let us write $a$ as a sum of PBW monomials $$a=\sum_{i=0}^{m} e^ig_i(h)f^j, g_i(h)\in k[h],g_0(h), g_m(h)\neq 0, 2(i-j)=nmod(p).$$ From $[e, a]=0$ we get that $[g_m(h), e]$ has no terms beginning with $e$ and $[e,f^{-\frac{n}{2}mod(p)}]=0$. Thus, $n=0$ and $g_m(h)\in R$. Hence, if we consider $g_m(h)(\frac{1}{4}\Delta)^m-a$ and apply induction on $m$ we will be done.

\end{proof}

\begin{prop} Let $\alpha\in \mf{Z}(\mf{U}\mf{g})$ such that it has no terms involving $e^p$. If $\alpha f^m=[e, b]$ for $0<k<p$ and $b\in \mf{U}\mf{g},$ then $\alpha=0$
\end{prop}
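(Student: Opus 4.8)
The plan is to convert the statement into a concrete recursion among the PBW coefficients of $\alpha$ and of $b$, and then to show that this recursion cannot be satisfied unless $\alpha$ vanishes. First I would split $\alpha$ into $\ad(h)$‑homogeneous pieces. Using $\mf{Z}(\mf{U}\mf{g})=k[e^p,f^p,h^p-h,\Delta]$ [V] together with the hypothesis that no $e^p$ occurs, write $\alpha=\sum_r (f^p)^r\beta_r$ with $\beta_r\in k[h^p-h,\Delta]$. Since $f^p$ has honest weight $-2p$ while $h^p-h$ and $\Delta$ have honest weight $0$, this is exactly the decomposition of $\alpha$ into honest weight spaces, and as $\ad(e)$ preserves that grading it suffices to treat each summand separately. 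Thus I am reduced to: if $\beta\in k[h^p-h,\Delta]$ and $\beta f^{K}=[e,b]$ with $K\equiv k\pmod p$, then $\beta=0$. Observe that $w:=\beta f^{K}$ is annihilated by $\ad(f)$, i.e.\ it is a lowest weight vector for the adjoint action, and the absence of $e^p$ bounds its $e$‑degree by $p-1$.

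Next I would expand both sides in the PBW basis $f^a h^b e^c$. Being of a single $\ad(h)$‑weight, $w$ has the form $w=\sum_{c} f^{c+K}S_c(h)\,e^c$, and likewise $b=\sum_c f^{c+K+1}g_c(h)\,e^c$. Recording first the identities $[e,f^m]=m\,f^{m-1}(h-m+1)$ and $e\,g(h)=g(h-2)\,e$, the equation $w=[e,b]$ becomes
\[ S_c(h)=(c+K+1)(h-c-K)\,g_c(h)+g_{c-1}(h-2)-g_{c-1}(h),\qquad g_{-1}=0. \]
Solving for $g_c$ shows that such a $b$ exists precisely when, for every $c$, the numerator $S_c(h)-\bigl(g_{c-1}(h-2)-g_{c-1}(h)\bigr)$ is divisible by $(h-c-K)$ while $c+K+1\not\equiv 0$; equivalently the scalar conditions $S_c(c+K)=g_{c-1}(c+K-2)-g_{c-1}(c+K)$ must hold for all $c$, together with the termination requirement $g_c=0$ for $c\ge p$ (so that $b$ is a genuine finite element).

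In parallel I would exploit centrality of $\beta$. Expanding $[\beta,e]=0$ in the same basis yields a second recursion $(c+1)(h-c)S_{c+1}(h)=S_c(h)-S_c(h-2)$, which forces $\deg S_{c+1}=\deg S_c-2$ and expresses every $S_c$ in terms of $S_0=\beta|_{e=f=0}\in k[h^p-h,\;h^2+2h]$. Substituting these into the divisibility conditions above converts the whole problem into a finite system of linear conditions on the single polynomial $S_0$; running them from $c=0$ upward should force the coefficients of $S_0$ to vanish successively, giving $S_0=0$ and hence $\beta=0$.

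The hard part will be the characteristic‑$p$ resonances of the recursion. The division step degenerates exactly at the indices with $c+K+1\equiv 0$ (that is $c\equiv p-k-1$), and the evaluation point $h=c+K$ can collide with zeros already imposed by centrality; these are precisely the places where a lowest weight vector is permitted to lie in $\operatorname{im}(\ad e)$ in characteristic $p$ although it cannot in characteristic $0$. The crux is therefore to show that the structural constraint $S_0\in k[h^p-h,\;h^2+2h]$ coming from $\beta\in k[h^p-h,\Delta]$ is incompatible with meeting every resonant condition at once. I would also dispatch the boundary case $k=p-1$ (where $K+1\equiv 0$ already at $c=0$) separately, using the anti‑involution $j$ to transform $\beta f^{K}\in\operatorname{im}(\ad e)$ into $j(\beta)e^{K}\in\operatorname{im}(\ad f)$ and then running the symmetric recursion.
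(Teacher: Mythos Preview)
Your reductions are sound: the honest $\mathbb{Z}$-grading by $h$-weight legitimately splits off the $(f^p)^r$ factors (this parallels the paper's first step of reducing to $\alpha$ not divisible by $f^p$), and the PBW recursion you write for $[e,b]=w$ is correct. However, what you label ``the crux'' is the entire content of the proposition. After the bookkeeping you are left needing to show that a lowest-weight vector of the special shape $\beta f^K$, $\beta\in k[h^p-h,\Delta]$, cannot meet all the resonant compatibility conditions, and you offer no mechanism beyond ``running them from $c=0$ upward should force vanishing.'' In characteristic $p$ this is precisely where the difficulty lies: at the resonance $c\equiv p-k-1$ the factor $c+K+1$ vanishes, so $g_c$ is \emph{unconstrained} at that step rather than determined, and the later divisibility conditions become conditions on this free parameter as much as on $S_0$. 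There is no evident descent forcing $S_0=0$, and the structural constraint $S_0\in k[h^p-h,\,h^2+2h]$ is too coarse to extract an obstruction by hand once $\deg_\Delta\beta$ is large. As written, the decisive step is missing.

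The paper's argument is entirely different and bypasses the recursion. It passes to the family of baby Verma modules $V_\lambda$ for regular semi-simple characters ($e^p,f^p\mapsto 0$, $h^p-h\mapsto \lambda^p-\lambda\neq 0$). A short lemma on polynomials in one variable shows that a nonzero $\alpha$ of the prescribed form acts nontrivially on $V_\lambda$ for infinitely many $\lambda$; on the other hand a direct finite-dimensional computation in $\End(V_\lambda)\cong M_p(k)$ shows that the operator $f^m$ lies in $[e,\End(V_\lambda)]$ only for $\lambda$ satisfying a fixed nonzero polynomial, hence for only finitely many $\lambda$. These two statements are incompatible with $\alpha f^m=[e,b]$ unless $\alpha=0$. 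In effect the paper trades your single recursion in $\mf{U}\mf{g}$ for a one-parameter family of $p$-dimensional linear problems and wins by genericity in $\lambda$; if you want to rescue your approach, specialising your recursion to the action on $V_\lambda$ is exactly how you would recover the paper's argument.
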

\begin{proof}
First, suppose $\alpha$ is  divisible by $f^p$. Then we may write $\alpha= \beta f^{p}$. If we write $b$ as sum of PBW monomials in $e^ih^jf^k,$ taking the commutator of each such monomial with $e$ of  never increases power of $f$ in it. This implies that some of monomials of $b$ with the power of $f$ less than $p$ must commute with $e,$ hence we may disregard it. Thus we may write $b=b'f^p$. Therefore without loss of generality we may assume that $\alpha$ is not divisible by $f^p$. 

We claim that in this case there exists a regular semi-simple character $\chi$ such that $\alpha$ will not vanish on $V_{\lambda}$. Let us recall the corresponding definitions [FP]. A character is called regular semi-simple if it lies in the coadjoint orbit under the action of $SL_2(k)$ of a character defined as follows :$$\chi(e^p)=\chi (f^p)=0, \chi(h^p-h)=c=\lambda ^p-\lambda, c\neq 0\in k,$$ and $$V_{\chi, \lambda}=\mf{U}\mf{g}_{\chi}\otimes _B k_{\lambda}$$ where $B$ is a subalgebra generated by $e, h$ and $k_{\lambda}$ is its one-dimensional representation on which $e$ vanishes and $h$ acts as a multiplication by $\lambda$ where $\lambda^p-\lambda=c$. Now the desired statement boils down to the following statement about polynomials in one variable. We thank M. Boyarchenko for the following quick proof.
\begin{lemma} If $\sum g_i(s(s+2))(s^p-s)^i=0$ for $g_i(s)\in k[s]$, deg$(g_i)<p$ then all $g_i$ must equal 0.
\end{lemma}
\begin{proof}

If we make the substitution $s=s-1$ and  replace $g_i(s)$ by $g_i(s-1),$ then we will have $$\sum_0^mg_i(s^2)(s^p-s)^i=0, g_m\neq 0, g_0\neq 0.$$ We have $$0=\sum s^ig_i(s^2)(s^{\frac{p-1}{2}}-1)^i.$$ Then, looking at even powers of $s$ we conclude that $$0=\sum_ig_{2j}(s^2)((s^p-s)^2)^j.$$ Since deg$(g_{2j}(s^2))<2p$, we get that all $g_{2j}=0$, a contradiction.

\end{proof}
As it is well-known [FP], these modules are irreducible and have as a basis $v_{\lambda}, fv_{\lambda}, ...,f^{p-1}v_{\lambda}.$ To complete the proof of the proposition, it will suffice to check that $f^m\in $ad$(e)(End(V_{\lambda}))$ only for finitely many $\lambda\in k$. This is very easy to see since if we have $f^m=[e, A]$, where $A\in End(V_\lambda)$, then $A=a_i:kf^iv_{\lambda}\to kf^{i+m+1}v_{\lambda}$, where $a_i\in k.$ If we write out the commutator condition we immediately see that $\lambda$ is a root of a polynomial in $F_p$ which is independent of $\lambda$, hence there are only finitely many such $\lambda$.

\end{proof}  
 Our goal will be to produce an element $A_x$ of weight 0 such that \\$A_x\in R[t_z, e, x]\cap F^{p-1}H_z$ and$[A_x,\Delta]=[x^p, \Delta]$. By the lemma below, this will complete the proof.

\begin{lemma} Let $A_x\in R[t_z, e, x]$ be an element of weight 0 lying in $F^{p-1}H_z,$ such that $[x^p-A_x, \Delta]=0.$ Then $x^p-A_x$ lies in the center of $H_z$.

\end{lemma}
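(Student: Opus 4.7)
The plan is to verify that $u := x^p - A_x$ commutes with each of the five generators $e, f, h, x, y$ of $H_z$, which immediately gives centrality. Commutation with $e, h, x$ is direct: since $R$ and $t_z$ lie in $\mf{Z}(H_z)$ and $[e, x] = 0$, the element $A_x \in R[t_z, e, x]$ sits inside a commutative subalgebra containing $e$ and $x$; the weight-$0$ condition gives $[h, A_x] = 0$; meanwhile $x^p$ commutes with $e, x$ trivially and with $h$ because $[h, x^p] = p\, x^p = 0$ in characteristic $p$. Together with the hypothesis $[u, \Delta] = 0$, the problem reduces to showing $[u, f] = 0$ and $[u, y] = 0$.

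The crucial move exploits the Casimir identity $4ef = \Delta - h^2 + 2h$. Because $u$ commutes with $\Delta$, with $h$, and hence with $h^2$, it commutes with $4ef$, and (as the characteristic is not $2$) with $ef$ itself. Expanding by the derivation property of $\ad u$,
\[
0 = [u, ef] = e[u, f] + [u, e]\, f = e[u, f].
\]
I then invoke that $H_z$ is a domain. This follows from the PBW property: $\Gr(H_z) = \mf{U}(\mf{g} \ltimes V)$ is an enveloping algebra of a Lie algebra and is therefore a domain, and a filtered algebra whose associated graded is a domain is itself a domain. Since $e \neq 0$, the equation $e[u, f] = 0$ forces $[u, f] = 0$. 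A one-line Jacobi identity using $[f, x] = y$ then yields
\[
[u, y] = [u, [f, x]] = [[u, f], x] + [f, [u, x]] = 0 + 0 = 0,
\]
so $u$ commutes with every generator and is therefore central.

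The difficulty one might anticipate --- controlling $[u, f]$ and $[u, y]$ directly, since neither $f$ nor $y$ commutes with $A_x$ and the hypothesis involves only a single element of $\mf{U}\mf{g}$ --- is dissolved by the observation above: the constraint $[u, \Delta] = 0$, via the Casimir identity, silently upgrades to commutation with $ef$, after which the domain property of $H_z$ cleanly strips off the factor of $e$. Essentially no real obstacle remains; the conceptual content is recognizing that $\Delta$ is precisely the right element against which to enforce commutation.
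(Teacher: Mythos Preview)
Your proof is correct and follows the same route as the paper's. The paper's argument is extremely terse---``Since $x^p-A_x$ commutes with $e,h$ we get that $[x^p-A_x,f]=0$; applying $\ad x$ to this, we get $[x^p-A_x,y]=0$''---but unpacking it yields exactly what you wrote: commutation with $e$, $h$, and $\Delta$ forces commutation with $ef$ via the Casimir identity, and then the factor $e$ is cancelled (your invocation of the domain property of $H_z$ makes this step explicit, which the paper leaves implicit); the final Jacobi step is the same as the paper's ``apply $\ad x$''.
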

\begin{proof}
Since $x^p-Ax$ commutes with $e, h$ we get that $[x^p-A_x, f]=0.$ Applying $\ad x$ to this, we get that [$x^p-A_x,y]=0$, hence $x^p-A_x$
 is a central element.

\end{proof}
   The following describes maximal vectors in $F^{p-1}H.$
\begin{prop} Let $A\in F^{p-1}H$  be a homogeneous (in $x, y)$ maximal vector. Then it can be expressed as a sum $B+C$, where $B$ is a linear combination of elements  of the form $\gamma e^j[x^i,\Delta], \gamma e^jx^i$,$ \gamma\in R[t]$, while  $C$  is a homogeneous maximal vector whose coefficient of highest power of $y$ lies in $\mf{Z}(\mf{U}\mf{g})$ and is not divisible by $e$.

\end{prop}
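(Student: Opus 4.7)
The plan is to expand $A$ in the PBW basis, use the maximal vector condition to reduce the problem to controlling the coefficient $b_k$ of the highest $y$-power in $A$, and then eliminate its $e$-divisible portions by subtracting carefully chosen $B$-type maximal vectors.

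First I would write $A=\sum_{i=0}^n b_iy^ix^{n-i}$ with $b_i\in\mf{U}\mf{g}$. Using $[e,x]=0$, $[e,y]=x$ and $[x,y]=0$ in $H$, the condition $[e,A]=0$ yields the recursion $(i+1)b_{i+1}=-[e,b_i]$ together with $(\ad e)^{n+1}b_0=0$. Let $k$ be the largest index with $b_k\neq 0$; then $[e,b_k]=0$, so $b_k$ is a maximal vector in $\mf{U}\mf{g}$, and Lemma~2.3 gives $b_k\in R[\Delta,e]$. Write $b_k=\sum_{j=0}^{p-1}\alpha_j e^j$ with $\alpha_j\in R[\Delta]$; the task reduces to subtracting $B$-type maximal vectors of $x,y$-degree $n$ so as to eliminate the $\alpha_je^j$ with $j\ge 1$ and leave a residue $C$ whose top $y$-coefficient lies in $R[\Delta]\setminus e\,\mf{U}\mf{g}$.

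Next I would compute the top $y$-coefficients of the building blocks explicitly. Using $t=ey^2+hyx-fx^2$ and the formula $[x,\Delta]=(3-2h)x-4ey$ (which comes from $F(\Delta)=2$, $G(\Delta)=-3$ at the start of the section), one checks: for $\rho\in R$ and $\gamma=\rho t^s$, the element $\gamma e^j x^i$ is a maximal vector of $x,y$-degree $2s+i$ whose coefficient of the highest $y$-power $y^{2s}$ (attached to $x^i$) is $\rho e^{s+j}$; and $\gamma e^j[x,\Delta]$ is a maximal vector of $x,y$-degree $2s+1$ whose coefficient of $y^{2s+1}$ is $-4\rho e^{s+j+1}$. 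Varying $s,j,i$, matching the parity of $k$, and absorbing surplus $e$-powers via $e^p\in R$, these blocks realize every coefficient of the form $\rho e^l$ with $\rho\in R$ at the prescribed $x,y$-degree $n$; this lets us cancel the $R$-parts of the $\alpha_j e^j$ for $j\ge 1$ term by term.

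The main obstacle I foresee is the $\Delta$-dependent part of the $\alpha_j$: since $\gamma\in R[t]$ carries no $\Delta$, the top $y$-coefficient of a single $B$-type element is always $\Delta$-free. To kill $\Delta^m e^j$-summands of $b_k$ with $m\ge 1$, I would run a secondary induction on $m$ using $[\omega,x]=(F(\omega)h+G(\omega))x+2eF(\omega)y$ at $\omega=\Delta^m$: since $F(\Delta^m)=2m\Delta^{m-1}+\text{lower}$ with $2m\neq 0$ for $m<p$ (the range forced by $A\in F^{p-1}H$), the commutator $[x,\Delta^m]$ has top $y$-coefficient $-4me\Delta^{m-1}+\text{lower}$. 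Expanding $[x^i,\Delta^m]$ by Leibniz in $\Delta$ and identifying the leading $\Delta^{m-1}e^{j+1}$-contribution with a combination of $B$-type elements (modulo lower-$y$-degree corrections that fold into the $C$-residue) lets us peel off the top $\Delta$-order and iterate; after $m$ steps we reduce to the $\Delta$-free case treated above. The bookkeeping required to show that the Leibniz remainders reliably land in the span of $B$-type elements plus a strictly simpler residue is, I expect, where the bulk of the proof's effort concentrates.
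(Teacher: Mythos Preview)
Your approach has a genuine gap, and the obstacle you flag is not the real one.

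You claim the $B$-type blocks realize every top $y$-coefficient $\rho e^l$ with $\rho\in R$; this is false. A block $\rho t^s e^j x^i$ has top $y$-term $\rho e^{s+j}y^{2s}x^i$, and $\rho t^s e^j[x,\Delta]$ has top $y$-term $-4\rho e^{s+j+1}y^{2s+1}$. Matching the top $y$-power $y^k$ of $A$ pins $s=\lfloor k/2\rfloor$, so the $e$-exponent of any realizable top coefficient is at least $\lceil k/2\rceil$. Hence if $b_k=\alpha e^m$ with $0<m<\lceil k/2\rceil$, no $B$-subtraction can reach it and your reduction stalls. (Absorbing $e^p$ into $R$ goes the wrong way: it can only raise the available exponent.)

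The paper closes precisely this low-$e$ case via Proposition~2.1, which you never invoke. The paper first observes that the span of $B$-type elements is a $k[f^p,h^p-h,\Delta,t]$--$k[x]$ bimodule, so it is closed under left multiplication by $\Delta$; this dissolves your ``main obstacle'', since one may take coefficients in $R[\Delta]$ throughout and no secondary $\Delta$-induction is needed. With that in hand, the paper assumes $A$ is not right-divisible by $x$, writes the coefficient of $y^n$ as $a_0=\alpha e^m$ with $\alpha\in R[\Delta]$ not divisible by $e$, and splits on $2m$ versus $n$. If $2m\ge n$, one subtracts $\alpha e^{m-\lfloor n/2\rfloor}t^{\lfloor n/2\rfloor}$ (times $\tfrac14[\Delta,x]$ when $n$ is odd) to annihilate $a_0$; the difference is then right-divisible by $x$ and one inducts on the $x,y$-degree, not on $e$-content. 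If $2m<n$, the recursion from $[e,A]=0$ gives $\alpha e^m=c\,(\ad e)^n(a_n)$, and applying $(\ad f)^{2m}$ yields $\alpha f^m\in [e,\mf{U}\mf{g}]$; Proposition~2.1 then forces $m=0$, so $a_0\in R[\Delta]=\mf{Z}(\mf{U}\mf{g})$ is already $e$-free and $A$ itself is the $C$-term. Your proposal contains no substitute for this representation-theoretic step.
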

\begin{proof}
 First we establish that  $[\Delta, x]x^n$ can be expressed as a linear combination of elements of the form $\gamma e^j[x^i, \Delta], \gamma e^jx^i$. Indeed, 
\begin{eqnarray*}
[\Delta, x^n] & = & \sum_{k<n} x^k((2h-3)x+4ey)x^{n-k-1}\\
& = & n((2h-3)+4ey)x^{n-1}+c'x^{n} \\
& = & n[\Delta, x]x^{n-1}+c'x^{n},
\end{eqnarray*}
where $c'$ is some constant. Thus $B$ can be taken from the span of \\$\gamma e^i[x, \Delta]x^j, \gamma e^ix^j$  which is  a $k[f^p, h^p-h, \Delta, t]-k[x]$-module. Let $A$ be a homogeneous (with respect to $\ad h$ and in $x, y$) maximal vector of degree $n.$ Without loss of generality we may assume that it is not divisible by $x$ from the right, so we may write $A=\sum a_ix^iy^{n-i}$ with $a_0\neq 0$. We know that $a_0$ must be a maximal vector, thus by the above lemma we may write $a_0=\alpha e^m$, where $\alpha\in R[\Delta]$ and it is not divisible by $e$. Now, if $\frac{1}{2}n\leq m$, then considering one of the following elements $$\alpha e^{m-\frac{1}{2}n}t^{\frac{1}{2}n}-A, \alpha e^{m-\frac{1}{2}(n-1)}t^{\frac{1}{2}(n-1)} \frac{1}{4}[\Delta, x]-A,$$ it will be divisible by $x$ from the right and we may apply induction on the degree of $A$. 

So it just remains to deal with the case $2m< n.$ In this case $\alpha e^m=c (\ad e)^n(a_n)$ (here we also need the fact that $A$ can not be divisible by $y$ from the right, which was established earlier in the proof), where $c$ is some nonzero constant. Applying $(\ad f)^{2m}$ to both sides we get that $\alpha f^m=[e,b]$ for some $b\in \mf{U}\mf{g}$, but as the above proposition shows this can only happen when $m=0$, in which case we get an element of type $C$ from the statement of the proposition.

\end{proof}

 Passing to the associated graded, we see that we may replace $H$ by $H_z$ in the lemma above. Now let us look at $[X^p, \Delta].$ Clearly this is a maximal vector lying in $F^{p-1}H_z.$ So we may apply the above proposition. Using an analogous proposition for $y,$ we may write $$[x^p, \Delta]=[A_x, \Delta]+B_x+C,$$ where $A_x, B_x\in R[t_z,\Delta, e, x]$ and $C$ is a maximal vector whose coefficient in front of the highest power of $y$ is not divisible by $e$. However $[x^p, \Delta]$ is divisible by $e$ from the left and so are all the terms $[A_x,\Delta], B_x$ (since they have weight 0), forcing $C=0$. Analogously for $y,$ we have $[y^p, \Delta]=[A_y',\Delta]+B'_y,$ where  $A'_y, B'_y\in R[t_z, \Delta, f, y]$ all have weight 0 (with respect to ad($h$) as always). Our objective is to prove that $B_x=0$ and $A_x\in R[e, x, t_z].$ We will accomplish this by utilizing the involution $j$. If we apply it to the last equality we will get 
\begin{eqnarray*}
[x^p, \Delta]=-[j(y^p), j(\Delta)]=[j(A'_y), \Delta]-j(B'_y)=[a_x, \Delta]+B_x. 
\end{eqnarray*}
But on the other hand 
\begin{align*}
[x^p, \Delta] &=4e[x^p, f]=-4e(\ad x)^{p-2}(z)\\
[y^p, \Delta] &=4f[y^p,e]=4f(\ad y)^{p-2}(z).\\
\end{align*}
Therefore,
\begin{eqnarray*}
e^{-1}[x^p, \Delta] &=&-\frac{1}{(p-2)!}(\ad e)^{p-2}(f^{-1}[y^p, \Delta])\\ 
&=& -(\ad e)^{p-2}(f^{-1}[y^p, \Delta]).
\end{eqnarray*}
Comparing above formulas, we get
\begin{eqnarray*}
[j(A'_y), \Delta]-j(B'_y)=-e([ad((e)^{p-2}[f^{-1}A'_y, \Delta]+ad(e)^{p-2}f^{-1}B'_y]).
\end{eqnarray*}
Now if $B'_y=f^iy^{p-i}$, then 
\begin{eqnarray*}
(\ad e)^{p-2}(f^{i-1}y^{p-2i}) &=&(-1)^{i-1}(p-2)!e^{i-1}x^{p-2i}\\ 
&=&(-1)^{i-1}e^{i-1}x^{p-2i}, 
\end{eqnarray*}
so 
\begin{eqnarray*}
-j(f^iy^{p-2i})=(-1)^ie^ix^{p-2i}=(\ad e)^{p-2}(B'_y). 
\end{eqnarray*}
Thus if we write 
\begin{eqnarray*}
A'_y=\alpha _i \phi_i(\Delta)y^i, B'_y=\beta _i \psi _i(\Delta)y^i 
\end{eqnarray*}
where
$\alpha_i, \beta_i\in R[f, t_z]$; $\phi_i(\Delta), \psi_i(\Delta)\in \Delta k[\Delta],$ we will get that 
\begin{eqnarray*}
[\alpha_i [\phi_i(\Delta),y^i]+\beta_i\psi_i(\Delta)y^i, \Delta]=2B'_y. 
\end{eqnarray*}
The next couple of lemmas imply that from the last equality it follows that $B'_y=0$ and $\phi_i$ are constants (they do not contain $\Delta$) which will end the proof of existence of $x_p$. 
\begin{lemma}
  Let $A\in R[\Delta, e, x, t_z]\cap  F^{p-1}H_z$ be of weight 0. Then there does not exist  a maximal vector $B$ which can be expressed as a sum of elements of the form $\alpha_it_z^i[x, \Delta]x^j, \alpha_i\in R[\Delta, e],$ such that $[B,\Delta]=A$.
\end{lemma}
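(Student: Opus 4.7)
The plan is to pass to the associated graded $H=\Gr H_z\cong\mf{U}(\mf{g}\ltimes V)$ via the $V$-filtration $F^{\bullet}$, where $x$ and $y$ commute, and to work at the level of top symbols. Since $\Delta\in F^0 H_z$, the derivation $\ad\Delta$ preserves the filtration, so the hypothetical equality $[B,\Delta]=A$ in $H_z$ descends to matching equalities between top $V$-degree symbols in $H$.

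The key computation is to record $[x,\Delta]=-(2h-3)x-4ey$, and by induction on $j$, $[x^j,\Delta]=-(2jh-j(j+2))x^j - 4j\,ey\,x^{j-1}$. A Leibniz-rule expansion then gives the top symbol of $[[x,\Delta]x^j,\Delta]$ in $H$ as a $\mf{U}\mf{g}$-linear combination of monomials in $x,y$, whose largest power of $y$ is inherited from the $-4ey$ factor in $[x,\Delta]$. Multiplying by the central $t_z^i$ and by $\alpha_i\in R[\Delta,e]$ and summing, one obtains a closed description of the top symbol of $[B,\Delta]$. On the other side, $A\in R[\Delta,e,x,t_z]$ has $y$-content only through powers of $t_z$; combined with the $\ad h$-weight-zero condition, which forces each $\alpha_i$ to have negative even weight $-(j+1)$ and hence to contain factors of $f^p$, this severely restricts the possible matching.

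Extracting the coefficient of the highest power of $y$ on each side and using linear independence of PBW monomials, the equality $[B,\Delta]=A$ reduces to a relation of the shape $\alpha f^m\in(\ad e)(\mf{U}\mf{g})$ for some $\alpha\in\mf{Z}(\mf{U}\mf{g})$ not divisible by $e^p$ and some $0<m<p$. By the proposition on such relations proved earlier in the section, this forces $\alpha=0$ and hence $\alpha_i=0$; iterating over the indices $(i,j)$ yields $B=0$, hence also $A=0$.

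The main obstacle is the combinatorial bookkeeping: each $t_z^i$ has its own nontrivial expansion in $x,y$, so top symbols contributed at a given $V$-degree by different indices $(i,j)$ overlap. The cleanest organization is an induction on the maximal value of $2i+j+1$ occurring in $B$, peeling off the top-$V$-degree layer and reducing to a single summand, where the leading-$y$ coefficient has the clean form $\alpha f^m$ to which the earlier proposition applies directly; verifying that this layer is genuinely nonzero (and not wiped out by an accidental cancellation with lower-degree contributions) is the crux of the argument.
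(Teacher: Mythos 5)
Your proposal is a plan rather than a proof, and the step you yourself flag as ``the crux'' --- verifying that the top layer survives without cancellation --- is exactly the content of the lemma and is not carried out. Two of the intermediate assertions are also problematic. First, the weight argument is wrong: $\ad h$-eigenvalues in characteristic $p$ live in $\Z/p$, so an element of $R[\Delta,e]$ of weight $-(j+1)$ need not contain $f^p$ at all; for instance $e^{(p-1)/2}$ already has weight $-1$, and indeed in the relevant situation the coefficients are of the form $\gamma e^m$ with $m=\frac{1}{2}(p-1)$, not multiples of $f^p$. This kills the ``severe restriction'' you hoped to extract from the weight-zero condition. Second, the claimed reduction to the earlier proposition (that $\alpha f^m=[e,b]$ forces $\alpha=0$) is asserted but never derived: the equation $[B,\Delta]=A$ relates $B$ to $\ad\Delta$, and via $[B,\Delta]=4e[B,f]$ to $\ad f$, not to the image of $\ad e$; it is not explained how an $\ad e$-coboundary condition on $\alpha f^m$ would arise from comparing top $y$-coefficients. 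That proposition is the right tool for the \emph{other} structural result in this section (the description of maximal vectors in $F^{p-1}H$), but it does not plug into this lemma in the way you suggest.

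For comparison, the paper's argument is more elementary and avoids the bookkeeping you worry about: after passing to the associated graded (so $z=0$) and using $[B,\Delta]=4e[B,f]$, one first rules out a trailing factor $x^k$ by comparing lowest powers of $x$ (which would force some $f^r$ to be a maximal vector); then, writing $B$ as a sum of $\alpha_it^i[x,\Delta]x^{n-2i-1}$, the presence of a term $\alpha t^{(n-1)/2}[x,\Delta]$ forces $n$ odd, so $A$ can contain no monomial free of $x$; finally one computes directly that $[B,f]$ \emph{does} contain such a monomial, namely the $y$-term $([e^k,f]4e-e^k(2h+3))y$ with $k=\frac{1}{2}(p-1)$, which a short computation shows is nonzero --- a contradiction. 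If you want to salvage your approach you would need to actually exhibit, at a fixed top $V$-degree, a specific PBW coefficient of $[B,\Delta]$ that cannot be matched by anything in $R[\Delta,e,x,t_z]$; as written, the proposal does not identify one.
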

\begin{proof}
 By passing to the associated graded, it suffices to consider the case $z=0$. Assume that such a $B$ exists. We may assume that $A$ is homogeneous in $x, y$, since $[B, \Delta]=4e[B, f].$ By dividing by $e$ from the left we get $[B, f]=A_1$ with $A_1\in R[\Delta, e, t]$, wt($A$)=-2. Let us write $B=(\sum a_ix^iy^{n-i})x^k$, where $a_0\neq 0$. After picking terms with the lowest powers of $x$ we get $$ka_0y^nx^{k-1}=\beta f^ry^{2r}x^{n-2r}$$ for some $r$, where $\beta\in k[\Delta, e]$ implying that $f^r$ is a maximal vector.
This is false unless $r=0$, thus $k=0$. Writing $B$ as a sum of elements of type $\alpha_it^i[x,\Delta]x^{n-2i-1},$ we see by the preceding remark that it must contain a nonzero term of the form $\alpha t^{n-1/2}[x,\Delta]$ with $\alpha\in R[\Delta].$ In particular, $n$ must be odd, hence $A$ may not contain any terms with no powers of $x$ in it. If we can show that $[B, f]$ has such a term, we will have a contradiction. We have $\alpha t^{n-1/2}[[\Delta, x], f]=t^{n-1/2}[\alpha[\Delta, x],f],$ since weight of $B$=0, $\alpha=\gamma e^m$, where $m=\frac{1}{2}(p-1), \gamma\in R[\Delta].$ Hence it would suffice to show that $[e^k[\Delta, x], f]$ contains $y$. We have $$[e^k[\Delta,x], f]=-e^k[\Delta, y]+[e^k, f][\Delta, x].$$ Grouping $y$ terms in the above we get the following term 
$$([e^k, f]4e-e^k(2h+3))y.$$ Straightforward computation shows that this term is not 0.

\end{proof}

\begin{lemma} If an element $A\in F^{p-1}H_z$ commutes with $\mf{g},$ then it belongs to $R[\Delta,t_z]$.

\end{lemma}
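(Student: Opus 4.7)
The plan is to reduce to the associated graded algebra $H = \Gr(H_z) = \mf{U}(\mf{g} \ltimes V)$, prove the graded analogue there, and then lift back to $H_z$ by filtration induction. Since the filtration $F^\bullet$ is $\ad \mf{g}$-stable, the top symbol $\sigma(A) \in F^{p-1}H$ of a $\mf{g}$-invariant $A \in F^{p-1}H_z$ is itself $\mf{g}$-invariant, so it suffices to prove: every $\mf{g}$-invariant $\bar A \in F^{p-1}H$ lies in $R[\Delta, t]$.

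For the graded case, I would decompose $\bar A = \sum_{n=0}^{p-1} \bar A^{(n)}$ by $x, y$-homogeneous degree (an $\ad \mf{g}$-equivariant grading on $H$); each $\bar A^{(n)}$ is $\mf{g}$-invariant, hence a maximal vector of $\ad h$-weight $0$ and $x, y$-degree $n$. The preceding Proposition yields $\bar A^{(n)} = B^{(n)} + C^{(n)}$. The crux is to impose weight $0$ on each piece. A summand $\gamma e^j x^i$ or $\gamma e^j [x^i, \Delta]$ of $B^{(n)}$ has $\ad h$-weight $2j + i$ (as $\gamma$ is weight $0$ and $\Delta$ commutes with $h$), so weight $0$ forces $i = j = 0$; the commutator form then vanishes and $B^{(n)}$ collapses into the weight-$0$ subring generated by $R$, $\Delta$, and $t$. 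For $C^{(n)}$, its coefficient of the highest power of $y$, say $y^m$, lies in $\mf{Z}(\mf{Ug}) = R[\Delta]$, a weight-$0$ subspace; if $m > 0$ the corresponding term has weight $-m \neq 0$, contradicting weight-$0$ homogeneity of $C^{(n)}$, so $m = 0$ and $C^{(n)} \in R[\Delta]$. Since $R[\Delta]$ sits in $x, y$-degree $0$, $C^{(n)} = 0$ for $n > 0$ while $C^{(0)} \in R[\Delta]$. Thus $\bar A^{(n)}$ lies in the $x, y$-degree-$n$ component of $R[\Delta, t]$---i.e.\ $R[\Delta] \cdot t^{n/2}$ for $n$ even and $0$ for $n$ odd---and summing gives $\bar A \in R[\Delta, t]$.

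To lift the result to $H_z$, I would induct on the filtration degree of $A$. The base case $A \in F^0 H_z = \mf{Ug}$ yields $A \in \mf{Z}(\mf{Ug}) = R[\Delta]$ directly. For $A \in F^n H_z$ with $1 \leq n \leq p-1$: if $n$ is odd the graded case forces $\sigma(A) = 0$, so $A \in F^{n-1}H_z$ and the induction hypothesis applies; if $n$ is even then $\sigma(A) = q \cdot t^{n/2}$ for some $q \in R[\Delta]$, and since $t_z$ has top symbol $t$, the element $q \cdot t_z^{n/2} \in R[\Delta, t_z] \cap F^n H_z$ has matching top symbol, so $A - q \cdot t_z^{n/2} \in F^{n-1}H_z$ is still $\mf{g}$-invariant and the induction hypothesis finishes. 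The main obstacle is the weight-$0$ collapse in the graded case, which simultaneously forces $B^{(n)}$ into the central-plus-$t$ subring and kills $C^{(n)}$ for $n > 0$; once that is in place, the lift to $H_z$ is a routine filtration induction.
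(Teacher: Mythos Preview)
Your reduction to the associated graded and the filtration-lifting step are fine, and your treatment of the $C^{(n)}$ piece is essentially correct (since $n<p$, the leading $y$-coefficient lying in the weight-$0$ ring $R[\Delta]$ really does force $n=0$). The gap is in your handling of $B^{(n)}$.

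You claim that a summand $\gamma e^j x^i$ (or $\gamma e^j[x^i,\Delta]$) has $\ad h$-weight $2j+i$, and that weight $0$ forces $i=j=0$. In characteristic $p$ the weight is only defined modulo $p$, so the condition is $2j+i\equiv 0\pmod p$, which has many nontrivial solutions inside $F^{p-1}H$: for instance $e^{(p-1)/2}x$ is a maximal vector of weight $0$ and $x,y$-degree $1\le p-1$. More generally, any $e^j x^{p-2j}$ with $1\le j\le (p-1)/2$ is a weight-$0$ maximal vector in $F^{p-1}H$ that is certainly not in $R[\Delta,t]$. Your argument only exploits that $\bar A^{(n)}$ commutes with $e$ and $h$; it never uses commutation with $f$, and that is exactly what rules these elements out.

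The paper's proof uses the full $\mf{g}$-invariance in an essential way: commutation with $f$ (together with the earlier divisibility arguments) shows that a homogeneous $\mf{g}$-invariant of degree $<p$ cannot be divisible by $x$ or $y$ on the right. Writing $A=\sum_i \alpha_i x^{n-i}y^i$ with $\alpha_0,\alpha_n\neq 0$, the $e$-invariance forces $\alpha_0=\alpha e^m$ with $\alpha\in R[\Delta]$, weight $0$ gives $n=2m$, and then $A-\alpha t^m$ is $\mf{g}$-invariant and divisible by $x$, hence zero. To repair your approach you would need to bring in $[f,-]$ to eliminate the spurious weight-$0$ monomials $e^j x^{p-2j}$; at that point you are essentially recovering the paper's argument.
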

\begin{proof}We may pass to the associated graded, so once again we may assume that $z=0$. As explained before $A$ may not be divisible by either $x, y$. Assuming that it is homogeneous of degree $n$, we may write
$$A=\sum \alpha_ix^{n-i}y^i, \alpha_0,\\ \alpha_n\neq 0, \alpha_0=\alpha e^m, \alpha_n=\beta e^m$$ where $\alpha, \beta\in R[\Delta]$ and $n=2m$, Hence
$A-\alpha t^m$ is divisible by $x,$ so $A=\alpha t^m$.

\end{proof}

\begin{lemma} Let $A=\sum_i\alpha_i t^i[\phi_i,x^{n-2i}], \alpha_i\in R[e]$ be a homogeneous element (in x, y) of weight 0 such that $\phi_i\in \Delta k[\Delta].$ Then $[A,\Delta]=0$ if and only if all $\alpha_i$ are 0.
\begin{proof}
Since $A$ commutes with $e$ and $h$, we get that it must lie in the centralizer of $\mf{g}$, hence by the previous lemma we have 
$$A=\sum_i\alpha_i t^i[\phi_i,x^{n-2i}]=\beta t^{\frac{n}{2}}, \beta\in R[\Delta].$$ This  equality implies in particular that $\alpha_0[\phi_0,x^n]$ is divisible by $t.$ Let us show that this cannot be the case. Consider the universal enveloping algebra filtration on $H$ (recall that $H=\mf{U}(\mathfrak{sl}_2\ltimes V)$). The top symbol of $\alpha_0[\phi_0, x^n]$ with respect to this filtration is equal to $$n(F(\phi_0)h+G(\phi_0)x+2eF(\phi_0)y)x^{n-1},$$ which is clearly not divisible by $t=ey^2+hxy-fx^2$ in $\Gr (H)=\Sym (\mathfrak{sl}_2\oplus V)$, hence $\alpha_0[\phi_0, x^n]$ is not divisible by $t$ in $H$.

\end{proof}

\end{lemma}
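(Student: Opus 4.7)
The plan is to use the preceding centralizer lemma to force $A$ into a very restricted form, and then to peel off terms one at a time via a top-symbol computation. First I verify that $A$ centralizes all of $\mf{g}$. Each $\alpha_i \in R[e] = k[e, f^p, h^p - h]$ commutes with $e$, each $\phi_i \in \Delta k[\Delta]$ lies in $\mf{Z}(\mf{U}\mf{g})$ and so also commutes with $e$, and $[e,x] = 0$ in $H$, which gives $[e, A] = 0$. The weight-zero hypothesis yields $[h, A] = 0$. Writing $\Delta = h^2 - 2h + 4ef$ and using that $A$ commutes with both $e$ and $h$, the assumption $[A, \Delta] = 0$ collapses to $4e[A,f] = 0$, and since $H$ is a domain this forces $[A, f] = 0$.

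The previous lemma now places $A$ in $R[\Delta, t]$. Since $A$ is homogeneous of degree $n$ in $x, y$ while $\Delta$ has degree $0$ and $t$ has degree $2$ in that grading, $n$ must be even, say $n = 2m$, and $A = \beta t^m$ for some $\beta \in R[\Delta]$. Because $t \in \mf{Z}(H)$, I may reduce the defining identity $\sum_i \alpha_i t^i [\phi_i, x^{n-2i}] = \beta t^m$ modulo the central principal ideal $(t) \subset H$: every term with $i \geq 1$ vanishes mod $t$, leaving the statement that $\alpha_0 [\phi_0, x^n]$ is divisible by $t$ in $H$.

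I transfer this divisibility to the associated graded $\Gr(H) = \Sym(\mf{g} \oplus V) = k[e, f, h, x, y]$ via the PBW filtration, under which $t$ has top symbol $ey^2 + hxy - fx^2$. Using the Leibniz expansion $[\phi_0, x^n] = n[\phi_0, x]\,x^{n-1} + (\text{strictly lower PBW degree})$ together with the identity $[\phi_0, x] = (F(\phi_0)h + G(\phi_0))x + 2eF(\phi_0)y$ from the beginning of the section, the relevant symbol of $\alpha_0 [\phi_0, x^n]$ equals
\begin{equation*}
n\,\alpha_0\bigl((F(\phi_0)h + G(\phi_0))x + 2eF(\phi_0)y\bigr)x^{n-1},
\end{equation*}
whose $y$-degree is at most $1$. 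But $ey^2 + hxy - fx^2$, viewed as a polynomial in $y$ over $k[e,f,h,x]$, has $y$-degree exactly $2$ with nonzero leading coefficient $e$; hence it can divide a polynomial of $y$-degree $\leq 1$ only if that polynomial is identically zero. Extracting the $yx^{n-1}$ coefficient and appealing to the data $F(\Delta^k) = 2k\Delta^{k-1} + \cdots$ with $2k \not\equiv 0 \pmod p$ (guaranteed by the degree bound $\deg_\Delta \phi_0 < p$ inherited from the ambient $F^{p-1}H$), the non-vanishing of $F(\phi_0)$ for $\phi_0 \neq 0$ forces $\alpha_0 = 0$ in the domain $k[e,f,h]$.

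With $\alpha_0 = 0$ (or with the $i=0$ term already trivial when $\phi_0 = 0$), one cancels a factor of $t$ in the domain $H$ and applies the same argument to the shortened sum; induction on the number of nonzero terms completes the proof. The main obstacle is the last step of the third paragraph: one must simultaneously control the $y$-degree of the candidate symbol \emph{and} rule out accidental vanishing of $F(\phi_0)$ for $\phi_0 \neq 0$. Both are handled by information already stockpiled at the start of the section, but the degree bound on $\phi_0$ is essential, and would deserve to be stated explicitly in a fully formal write-up.
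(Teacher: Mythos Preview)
Your argument is correct and follows essentially the same route as the paper's: reduce to $A=\beta t^{n/2}$ via the centralizer lemma, deduce that $\alpha_0[\phi_0,x^n]$ is divisible by $t$, and rule this out by a top-symbol computation in $\Sym(\mf{g}\oplus V)$. You supply more detail than the paper does---the explicit derivation of $[A,f]=0$ from $4e[A,f]=0$, the $y$-degree argument for non-divisibility, the explicit induction after cancelling $t$, and the caveat that $F(\phi_0)\neq 0$ requires $\deg_\Delta\phi_0<p$---all of which are genuine clarifications rather than a different strategy.
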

Next, we will show that elements $1, t, ..., t^{p-1}$ are linearly independent over the ring $\mf{Z}_0(H)=k[e^p, f^p, h^p-h, x^p, y^p]$ (which by itself is a polynomial ring in five variables). Indeed, let $0=\sum_{i<p} a_i t^i$, where $a_i\in \mf{Z}_0(H).$ We may assume that the elements $a_i$ are homogeneous in $x, y$ and the whole expression in the summation has the same degree $n$ (in $x, y$). Thus we have \\$n=$deg$(a_i)+2i$, and since deg($a_i$) are multiples of $p$, we have $2i=n$mod$(p$). We deduce that at most one $a_i\neq 0$, hence all $a_i=0$.

 Now we claim that $$ t^p=e^p(x^p)^2-f^p(y^p)^2+(h^p-h)x^py^p.$$ Indeed, let us consider the following central element $a=t^p-
(e^p(x^p)^2-f^p(x^p)^2).$ It is clear that this element is divisible by both $x,y$ from the right, therefore from the argument above
we see that it must be divisible by $x^py^p.$ Since $a$ has degree $2p,$ we must have $$t^p= e^p(x^p)^2-f^p(x^p)^2 +\omega x^p y^p,$$ where 
$\omega$ is some element in $k[e^p, h^p-h, f^p].$ Now let us consider the enveloping algebra filtration on $H$, using the well-known fact that \\$(a+b)^p=a^p+b^p$mod$A/[A,A]$ for any algebra $A$ over $k$ and any elements $a, b\in A$, we see that $t^p-(e^p(x^p)^2-f^p(x^p)^2-(h^p-h)x^py^p)$ has the filtration degree less than $3p.$ Hence, we see that $$t^p-(e^p(x^p)^2-f^p(x^p)^2+(h^p-h)x^py^p)=\alpha x^py^p$$ where $\alpha$ is some constant. We claim that this constant must be 0. Indeed, if we write $t^p$ as the sum of PBW monomials ($x, y$ on the right), then it becomes clear that each monomial term appearing in $t^p$ belongs to $\mf{g}H.$ Thus we may conclude that $\alpha=0$. To conclude, we have shown that $\mf{Z}(H)$ is isomorphic to $k[x_1,x_2,x_3,x_4,x_5,y]/(y^p-x_1x_2^2-x_3x_4^2+x_5x_2x_4)$, thus $\spec (\mf{Z}(H))$ is a (ramified) degree $p$ covering of $\mathbb{A}^5$.

Now let us consider the case of an arbitrary  $z.$ We will show that elements $1, t_z, ..., t_z^{p-1}$ are linearly independent over $\mf{Z}_0(H_z)=k[e^p, f^p, h^p, x_p, y_p]$ and generate $\mf{Z}(H_z)$ as a module over $\mf{Z}_0(H_z)$. This immediately follows from the corresponding statement for $1, t, ...t^{p-1}$ over $\Gr (\mf{Z}(H_z))$. Thus there exists a monic polynomial $f(\tau)$ in one variable $\tau$ of degree $p$ such that $\mf{Z}(H_z)$ is isomorphic to $\mf{Z}_0(H_z)[\tau]/(f(\tau)).$ Thus Spec($\mf{Z}(H_z)$) is a degree $p$ covering of $\spec \mf{Z}_0(H_z)$)=$\mathbb{A}^5$.

 Let us write an explicit formula for the central element $x_p$ when the parameter $z$ is at most linear. Clearly, if $z$ is a constant, then already $x^p$ is central, thus we assume that $z=\Delta + a, a\in k$. We have $$[z,x]=(2h-3)x+4ey,$$ so $$[[z,x],x]=(\ad x)^2(z)=2x^2-4ez,$$ hence $$(\ad x)^{2n}(z)=(-4e)^{n-1}(2x^2-4ez)$$ and 
\begin{eqnarray*}
(\ad x)^{2n+1}(z)=[x,(\ad x)^{2n}(z)]&=&(-4e)^{n-1}(4e(2h-3)x+4ey))\\&=&(-1)^{n-1}(4e)^n((2h-3)x+4ey). 
\end{eqnarray*}
Let us write $p$ as $2k+1.$ We will make the following computation: $$[f,e^kx]=e^ky-khe^{k-1}x+k(k-1)e^{k-1}x.$$ We claim that $[f,x^p+(-4)^ke^kx]=0.$ Indeed $$[f,x^p]=-[x^p,f]=-(\ad x)^p(f)=(\ad x)^{p-2}(z).$$ Thus $$[f,x^p]=(-1)^k(4e)^{k-1}((2h-3)x+4ey),$$ and 
\begin{eqnarray*}
[f,-(-4)^ke^kx]=-4^ke^ky+4^kkhe^{k-1}x^(k-1)ke^{k-1}x=[f, x^p]. 
\end{eqnarray*}
As a result, we get that $x_p=x^p+(-4)^ke^kx$ lies in the center of $H_z$.
\begin{cor} Algebra $H_z$ is a prime Noetherian ring which is Auslander regular and Cohen-Macaulay, and it is finite dimensional over its center and its PI-degree is equal to $p^2$

\end{cor}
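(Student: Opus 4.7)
The plan is to derive each of the four assertions of the corollary from properties of the associated graded algebra together with the main theorem.

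First I would introduce the iterated filtration. The filtration $F^\bullet H_z$ has $\Gr^F H_z \cong \mf{U}(\mf{g}\ltimes V)$ by the PBW property recalled in the introduction. Equipping $\mf{U}(\mf{g}\ltimes V)$ with its standard enveloping-algebra filtration, one obtains an iterated associated graded isomorphic to $\Sym(\mf{g}\oplus V)\cong k[e,f,h,x,y]$, a polynomial ring in $5$ variables. Since this double graded ring is a commutative domain that is Noetherian, Auslander regular, and Cohen--Macaulay, all four of these properties lift to $H_z$ itself by standard filtered/graded transfer results (see e.g. Bj\"ork): the polynomial ring is a domain, so $H_z$ is a prime ring; it is Noetherian with a positive, exhaustive, finite-type filtration, so $H_z$ is Noetherian; and Auslander regularity together with the Cohen--Macaulay property pass from $\Gr H_z$ to $H_z$.

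The finite-generation of $H_z$ over its center $\mf{Z}(H_z)$ is immediate from Theorem 1: indeed the generators $e^p,f^p,h^p-h,x_p,y_p,t_z$ of $\mf{Z}(H_z)$ include the subalgebra $\mf{Z}_0(H_z)=k[e^p,f^p,h^p-h,x_p,y_p]$, and lifting the explicit free basis $\{e^af^bh^cx^dy^{e'}\mid 0\le a,b,c,d,e'<p\}$ of $k[e,f,h,x,y]$ over $k[e^p,f^p,h^p,x^p,y^p]$ through the filtration shows that $H_z$ is a free $\mf{Z}_0(H_z)$-module of rank $p^5$. In particular $H_z$ is finitely generated over $\mf{Z}(H_z)$.

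Finally, for the PI-degree: since $H_z$ is a prime Noetherian ring that is module-finite over its center, it is a prime PI algebra whose PI-degree equals the square root of the generic rank of $H_z$ over $\mf{Z}(H_z)$. By the main theorem $\mf{Z}(H_z)\cong \mf{Z}_0(H_z)[\tau]/(f(\tau))$ is a free $\mf{Z}_0(H_z)$-module of rank $p$, so combining with the rank $p^5$ computed above gives
\[
\operatorname{rank}_{\mf{Z}(H_z)} H_z \;=\; \frac{p^5}{p} \;=\; p^4,
\]
and hence the PI-degree equals $\sqrt{p^4}=p^2$. The one point that requires some care, rather than being a serious obstacle, is the citation of the correct lifting theorems for Auslander regularity and Cohen--Macaulayness from $\Gr H_z$ to $H_z$; all the arithmetic in the PI-degree step is forced by the freeness statements already established.
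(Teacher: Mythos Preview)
Your proposal is correct and follows essentially the same approach as the paper: pass to the associated graded (ultimately a polynomial ring) to lift primeness, Noetherianity, Auslander regularity and the Cohen--Macaulay property, then use that $H_z$ is free of rank $p^5$ over $\mf{Z}_0(H_z)$ together with $[\mf{Z}(H_z):\mf{Z}_0(H_z)]=p$ to deduce PI-degree $p^2$. The only cosmetic difference is that the paper cites [BG] for the lifting results and phrases the PI-degree computation via the field-of-fractions degree $[K(\mf{Z}(H_z)):K(\mf{Z}_0)]=p$, which is exactly your generic-rank argument.
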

\begin{proof}
After passing to the associated graded and applying results of [BG], the only thing which needs to be checked is the part about the center. It is well-known that $H$ being the enveloping algebra of a restricted Lie algebra is a free module of dimension $p^5$ over $\mf{Z}_0(H).$ Hence again by the associated graded argument, $H_z$ is a free module of dimension $p^5$ over $\mf{Z}_0.$ Since $[K(\mf{Z}(H_z)):K(\mf{Z}_0)]=p$ where $K(-)$ denotes the field of fractions, we see that the PI-degree of $H_z$ is equal to $p^2$. 
 \end{proof} 
Now we turn to the study of irreducible representations of $H_z.$ The main result is the following

\begin{theorem}For any $z\in k[\Delta]$ such that deg$(z)<p-1,$ the Azumaya and the smooth loci of $H_z$ coincide.

\end{theorem}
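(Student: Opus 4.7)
The theorem asserts coincidence of two subsets of $\spec(\mf{Z}(H_z))$; I would prove the two inclusions separately. The inclusion Azumaya $\sub$ smooth is the standard Morita argument: if $H_z$ is Azumaya at $\mfn$, then $(H_z)_{\mfn}$ is Morita equivalent to $\mf{Z}(H_z)_{\mfn}$; combined with the Corollary above, which gives $H_z$ finite global dimension via Auslander regularity, this forces $\mf{Z}(H_z)_{\mfn}$ to have finite global dimension and hence to be regular.

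For the converse smooth $\sub$ Azumaya I would apply the Poisson order framework of [BG]. The hypothesis $\deg z < p-1$ permits a simultaneous lift of $H_z$ together with its central generators $e^p, f^p, h^p-h, x_p, y_p, t_z$ to an algebra $\tilde H_z$ over a mixed-characteristic DVR $\mathcal{O}$ with residue field $k$; the explicit formula $x_p = x^p + (-4)^k e^k x$ in the linear-$z$ case (and its analogues in higher degree, together with the $j$-image for $y_p$) makes these lifts transparent. The lifted generators are not central in $\tilde H_z$, but their pairwise commutators lie in $p\tilde H_z$, so
$$\{a, b\} := \overline{p^{-1}[\tilde a, \tilde b]}$$
defines a canonical Poisson bracket on $\mf{Z}(H_z)$ and realizes $H_z$ as a Poisson $\mf{Z}(H_z)$-order. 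The constancy theorem for Poisson orders [BG] then implies that the dimension of the irreducible $H_z$-representation attached to each closed point of $\spec(\mf{Z}(H_z))$ is constant along symplectic leaves.

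It remains to match the symplectic foliation against the smooth/singular stratification. In the associated graded ($z = 0$), the bracket on $\mf{Z}(H) \subset \Sym(\mf{sl}_2 \ltimes V)$ is induced by the Kirillov--Kostant structure on $(\mf{sl}_2 \ltimes V)^*$, and a direct orbit calculation shows that coadjoint orbits with nonzero $V$-component are $4$-dimensional, while those contained in the $V$-zero locus $\{x_p = y_p = 0\}$ are at most $2$-dimensional. This rank-drop locus is exactly the singular locus of the hypersurface $t^p = e^p x_p^2 + f^p y_p^2 - (h^p-h)x_p y_p$ identified in the preceding section. By semicontinuity of Poisson rank, the same leaf-dimension bound persists for arbitrary $z$ with $\deg z < p-1$, so every smooth point lies on a $4$-dimensional leaf, on which the constancy theorem forces the attached irreducible to attain the generic PI-degree dimension $p^2$; the point is therefore Azumaya.

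The principal obstacle is the Poisson rank calculation together with its deformation invariance: one must compute commutators of the $z$-dependent lifts $\tilde x_p, \tilde y_p, \tilde t_z$ modulo $p^2$, verify that the rank-drop locus remains $\{x_p = y_p = 0\}$ under the deformation, and confirm that the Casimir cuts out the leaves as expected. Once this is done, the abstract machinery of Poisson orders in [BG] yields the conclusion.
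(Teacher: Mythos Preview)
Your route is genuinely different from the paper's. The paper never invokes a Poisson-order structure; instead it works entirely on the representation-theoretic side. Using that the PI-degree is $p^2$, the paper explicitly constructs irreducible $H_z$-modules of dimension $p^2$ over two families of central characters: first, the $SL_2$-orbit of characters with regular semisimple $\mf{sl}_2$-part and $\chi(x_p)\neq 0$ or $\chi(y_p)\neq 0$ (via a weight-vector argument showing $V_\chi$ contains $p$ distinct baby Vermas for $\mf{sl}_2$); second, generic points of the nilpotent $SL_2$-orbit (via an explicit baby-Verma $M_\chi=H_\chi\otimes_{B_\chi}k_\chi$ and an irreducibility argument for generic $\chi(y_p)$). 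These two families together leave a complement of codimension $\geq 2$, and then the Brown--Goodearl theorem (the [BG] cited here is Brown--Goodearl, not Brown--Gordon) converts this codimension bound, under the Auslander-regular/CM hypotheses already established, directly into the equality of the Azumaya and smooth loci.

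Your Poisson-order approach is plausible in outline, but there is a real gap beyond the computations you flag. The Brown--Gordon constancy theorem tells you only that the Azumaya locus is a union of symplectic leaves; it does \emph{not} by itself force a $4$-dimensional leaf to carry irreducibles of the maximal dimension $p^2$. For that you must show that every $4$-dimensional leaf actually meets the (open, dense) Azumaya locus. Since $t_z$ is a Casimir, the smooth locus is foliated by its level sets rather than being a single leaf, and a priori finitely many such level sets could sit entirely inside the non-Azumaya locus (a closed set of codimension $\geq 1$, not yet known to be $\geq 2$). Ruling this out is exactly the content of the codimension-$2$ estimate the paper obtains by constructing modules; your argument, as written, has no substitute for it. A secondary issue: the lift of $x_p,y_p$ to the DVR for general $z$ is not as transparent as you suggest, since the paper's explicit formula covers only the linear case and the general construction of $x_p$ is characteristic-$p$ specific (it proceeds via maximal-vector analysis in $F^{p-1}H_z$, not via a closed formula with manifestly $p$-integral coefficients).
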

\begin{proof}
By the previous result, $H_z$ has PI-degree equal to $p^2.$ Hence by the results of [BG], it suffices to show
that there exists an open set $U\subset Spec(\mf{Z}(H_z))$ whose complement has codimension at least 2 in $\mf{Z}(H_z),$ such that
for any $\chi\in U,$ algebra $H_z$ has an irreducible representation of dimension $\geq p^2.$

Let us consider a set of characters $\chi\in Spec\mf{Z}(H_z)$ for which $\chi(e^p)=\chi(f^p)=0, \chi(h^p-h)\neq 0$ thus $\mf{sl}_2$ part is regular semisimple, and $\chi(x_p)\neq 0$ or $\chi(y_p)\neq 0$. We claim that any such $\chi$ is in the Azumaya locus. Indeed, we may assume without loss of generality that $\chi(x_p)\neq 0.$ Let
$V_{\chi}$ be an irreducible module for $\chi.$ Since $h$ acts diagonalizably and $e$ acts nilpotently on $V_{\chi},$ there exists a nonzero element $v\in V_{\chi}$ such that $ev=0, hv=\lambda v$ for some $\lambda\in k$
(evidently $\lambda^p-\lambda=\chi(h^p-h)$.) Then, clearly $ex^iv=0, h(x^iv)=(i+\lambda)x^iv$ for any $i=0,..., p-1.$ It follows from the description of $x_p$ that $x^pv=x_pv=\chi(x_p)v\neq 0,$ in particular all $x_i$ are nonzero.
Therefore $V_{\chi}$ considered as a module over $\mf{sl}_2$ has $V(\lambda+i)$ as a submodule, for all $i=0, ..., p-1$  
(where $V(\lambda+i)$ is an irreducible module over $\mf{sl}_2$ for the regular
semi-simple character $\chi$ corresponding to weight $\lambda+i$ [FP]).  Thus, dim$V_{\chi}\geq p^2,$ hence
$\chi$ is in the Azumaya locus. 

 Next let us consider characters $\chi$ with the property $\chi(e^p)=\chi(x_p)=\chi(h^p-h)=0.$ 
 Let us denote by $H_{\chi}$ the fiber of $H_z$ at the point $\chi\in Spec(\mf{Z}(H_z)).$
 Let us consider the
 module $M_{\chi}=H_{\chi}\otimes_{B_{\chi}}k_{\chi}$ (analog of the baby Verma module),
 where $k_{\chi}=kv, xv=ev=0, hv=0,$ and $B_{\chi}$ is a subalgebra of $H_{\chi}$ generated by $e, x, h.$
 It is clear that dim$M_{\chi}=p^2$ and $M_{\chi}$ is spanned by elements
 $f^iy^jv, i, j<p.$ Our goal will be to produce $\beta^p=\chi(y_p)$ such that $M_{\chi}$ is irreducible. Let
 us assume that it is not irreducible. Let us choose a homogeneous weight element (with respect to ad$(h)$) $g\in k[f, y]$ of smallest
 total degree in $f, y$ such that $H_zgv$ is a proper submodule of $M_{\chi}.$ It is clear
 that we may choose such $g$ with the property that $egv=xgv=0.$
It is also clear that $y$ is invertible on $M_{\chi}.$ Therefore we may write 
$$g=\sum_{i=0}^{n} a_if^iy^{2n-2i}$$ 
where $a_0=1, a_n\neq 0, a_i\in k, 2n<p.$ It is easy to see that such $a_i$ are determined uniquely (follows from 
$[x, g]v=0$), in particular, they do not depend on $\beta$. Let us write $y_p$ as $y_p=y^p+fy^{p-2}b_0+...+f^kyb_k,$ where $k=\frac{1}{2}(p-1)$ 
and $b_i\in k[\Delta].$ Then we have the equality $y^p-c_0fy^{p-2}-...-\beta^p=0$ in the endomorphism ring of $M_{\chi}$.
Let $L_{\chi}=M_{\chi}/H_zgv,$ then $L(\chi)=k[f, y]/I$ where $I=Ann(L_{\chi})\cap k[f, y].$ We have that
$f^p-1, g\in I$, clearly we may choose $\beta$ so that $y^p+c_0fy^{p-2}+...\beta^p$ is invertible in $k[f, y]/I$
thus $L_{\chi}=0,$ a contradiction. Thus $M_{\chi}$ is irreducible for generic values of $\beta$.

 Let us summarize: Let $U$ be the $SL_2$-orbit (under the adjoint action) of characters $\chi$ such that
 $\chi(e^p)=\chi(f^p)=0, \chi(h^p-h)\neq 0$ and either $\chi(x_p)$ or $\chi(y_p)$ is nonzero. Also, let
 $V$ be an $SL_2$-orbit of characters $\chi$ such that $\chi(f^p)=1, \chi(h^p-h)=0, \chi(e^p)=0.$ Then we proved that $U$ is in the Azumaya locus, and the Azumaya locus has a nonempty intersection with $V.$ Now since
 $V$ has codimension 1, and the complement of $U\cup V$ has codimension 2 (in Spec($\mf{Z}(H_z)$)), we may deduce
 that the complement of the Azumaya locus has codimension $\geq 2.$ Hence, by the results of [BG], we are done.
\end{proof} 
In what follows we explicitly describe all irreducible modules for the case $z=0.$ Thus for a given central
character $\chi,$ we want to describe corresponding irreducible modules of $H_{\chi}.$ We may assume
without loss of generality that  a central character $\chi$  satisfies the equality $\chi(e^p)=0$ (using $SL_2(k)$-action.) We will adopt the following
notation. By $N_{\chi}$ we denote a subalgebra of $H_{\chi}$ generated by $f, y$ and $k_{\chi}$ will denote
its one dimensional representation $kv=k_{\chi}$ such that $f^pv=\chi(f^p)v, y^pv=\chi(y^p)v.$

\begin{prop} Let $\chi\in Spec(\mf{Z}(H_z))$ be a character such that $\chi(x^p)\neq 0,$ then the module
$M_{\chi}=H_{\chi}\otimes_{N_{\chi}}k_{\chi}$ is the irreducible rank $p^2$ module over $H_{\chi}.$ If $\chi(x^p)=0$ and $\chi(y^p)\neq 0$ then
$M_{\chi}=H_{\chi}\otimes_{B_{\chi}}k_{\chi}$ is the irreducible module of rank $p^2.$ If $\chi(x^p)=\chi(y^p)=0$ then any irreducible
$H_{\chi}$ modules is an irreducible $\mf{sl}_2$-module on which $x, y$ act as 0.
\end{prop}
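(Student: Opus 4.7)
The proof breaks into three cases according to which of $\chi(x^p)$ and $\chi(y^p)$ vanish, matching the three alternatives stated.

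In Case 3 ($\chi(x^p)=\chi(y^p)=0$), both $x$ and $y$ are nilpotent in $H_\chi$ and commute (since $z=0$ implies $[x,y]=0$). On any $H_\chi$-module $V$ the common kernel $V_0=\ker x\cap\ker y$ is then nonzero. The commutator relations $[e,x]=[f,y]=0$, $[e,y]=x$, $[f,x]=y$, $[h,x]=x$, $[h,y]=-y$ imply directly that $V_0$ is stable under $e$, $f$, $h$, hence is an $H_\chi$-submodule. For irreducible $V$ this forces $V_0=V$, so $x$ and $y$ act as $0$; the central element $t=ex^2+hxy-fy^2$ also acts as $0$ (consistent with $\chi(t)=0$, forced by $t^p=e^p(x^p)^2-f^p(y^p)^2+(h^p-h)x^py^p$), and $V$ descends to an irreducible $u(\mathfrak{sl}_2)_{\chi|_{\mathfrak{sl}_2}}$-module, classified in [FP].

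In Case 1 ($\chi(x^p)\neq 0$), the plan is to show $\chi$ is Azumaya and that $M_\chi$ realizes the unique simple module. The defining equation for $\mathfrak{Z}(H)$ established earlier in the excerpt has partial derivative with respect to $e^p$ equal to $-(x^p)^2$, which evaluates to $-\chi(x^p)^2\neq 0$, so $\chi$ is smooth; by Theorem 2.2 it lies in the Azumaya locus, and $H_\chi\cong M_{p^2}(k)$ has a unique simple module $V_\chi$ of dimension $p^2$. The subalgebra $N_\chi=\langle f,y\rangle$ is commutative ($[f,y]=0$) of dimension $p^2$, which is the maximal dimension of a commutative subalgebra of $M_{p^2}(k)$ by Schur's theorem; hence $N_\chi$ is a maximal commutative subalgebra, and by the standard fact that such subalgebras act cyclically on $k^n$, the simple module $V_\chi$ is free of rank one as $N_\chi$-module, i.e.\ $V_\chi\cong N_\chi$ as $N_\chi$-modules. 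Frobenius reciprocity then yields
\[
\mathrm{Hom}_{H_\chi}(M_\chi,V_\chi)\;\cong\;\mathrm{Hom}_{N_\chi}(k_\chi,V_\chi|_{N_\chi})\;\cong\;\mathrm{Hom}_{N_\chi}(k_\chi,N_\chi)\;=\;\mathrm{soc}(N_\chi),
\]
which is one-dimensional (spanned by $\bar f^{p-1}\bar y^{p-1}$ in $N_\chi\cong k[\bar f,\bar y]/(\bar f^p,\bar y^p)$). Since $H_\chi\cong M_{p^2}(k)$ is semisimple, $M_\chi$ decomposes as a direct sum of copies of $V_\chi$, and the one-dimensional Hom-space forces exactly one copy: $M_\chi\cong V_\chi$.

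Case 2 ($\chi(x^p)=0$, $\chi(y^p)\neq 0$) is handled symmetrically. Smoothness of $\mathfrak{Z}(H)$ at $\chi$ now comes from $\partial/\partial(f^p)=\chi(y^p)^2\neq 0$, so $\chi$ is again Azumaya; the argument of Case 1 then applies with the commutative subalgebra $B_\chi$ (of dimension $p^2$) playing the role of $N_\chi$. Alternatively, one applies the anti-involution $j$ from Section~2, which exchanges $\{f,y\}$ with $\{-e,x\}$ and thus swaps Cases 1 and 2, reducing Case 2 directly to what has just been proved. The main obstacle is the cyclic-module step in Case 1: verifying that the maximal commutative subalgebra $N_\chi\subset H_\chi\cong M_{p^2}(k)$ acts on $V_\chi$ as on itself by the regular representation. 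This relies critically on the dimensional coincidence $\dim N_\chi=p^2=\dim V_\chi$ together with the general structure theorem that a commutative subalgebra of $M_n(k)$ of the maximal dimension $n$ is self-centralizing and makes $k^n$ cyclic over itself; once that is in hand, both identifications fall out of Frobenius reciprocity, completing the proof.
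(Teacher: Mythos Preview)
Your treatment of Case 3 is correct and matches the paper's argument. The problems are in Cases 1 and 2.

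\textbf{The Schur-theorem step is false.} Schur's theorem says that a commutative subalgebra of $M_n(k)$ has dimension at most $\lfloor n^2/4\rfloor+1$, not $n$; for $n=p^2$ this bound is roughly $p^4/4$, far larger than $p^2$. So $\dim N_\chi=p^2$ does not make $N_\chi$ a maximal commutative subalgebra of $H_\chi\cong M_{p^2}(k)$. More importantly, even a maximal commutative subalgebra $A\subset M_n(k)$ of dimension $n$ need not make $k^n$ cyclic: take $A=kI+\operatorname{span}\{E_{13},E_{14},E_{23},E_{24}\}\subset M_4(k)$, which is maximal commutative of dimension $5$ but for which $k^4$ has no cyclic vector. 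Thus the assertion ``$V_\chi\cong N_\chi$ as $N_\chi$-modules'' is unjustified, and the Frobenius-reciprocity argument built on it collapses. In Case~2 there is a further slip: in the paper $B_\chi$ is the subalgebra generated by $e,x,h$, which is neither commutative ($[h,e]=2e$) nor of dimension $p^2$, so the parallel you draw with $N_\chi$ does not hold; the anti-involution $j$ does not interchange $N_\chi$ with this $B_\chi$ either.

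\textbf{How the paper actually proceeds.} Having observed (as you do) that $\chi$ is smooth, hence Azumaya, the paper bounds $\dim M_\chi\le p^2$ directly. For $M_\chi=H_\chi\otimes_{B_\chi}k_\chi$ this is immediate from the spanning set $f^iy^jv$ ($0\le i,j<p$). For $M_\chi=H_\chi\otimes_{N_\chi}k_\chi$ one a priori only gets the spanning set $e^ih^jx^lv$, of size $p^3$; the key extra step is to use that the central element $t=ey^2+hxy-fx^2$ acts as the scalar $\chi(t)$, so that $hxy$ acts on $M_\chi$ as $\chi(t)-ey^2+fx^2$, and from this one checks $hv\in k[e,x]v$ and hence $M_\chi=k[e,x]v$, giving $\dim M_\chi\le p^2$. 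Since $H_\chi\cong M_{p^2}(k)$ is simple and $M_\chi\neq 0$, one concludes $M_\chi\cong V_\chi$. If you want to repair your approach, you must supply an independent reason why $V_\chi$ is free of rank one over $N_\chi$; nothing in the general theory of commutative subalgebras of matrix rings gives this for free.
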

\begin{proof}

From the description of the center of $H$ it follows that the singular locus of $\mf{Z}(H)$ is $x^p=y^p=0.$ 
If $\chi\in Spec(\mf{Z}(H))$ belongs to the singular locus, and $V_{\chi}$ is an irreducible module of $H_{\chi},$
then it is clear that there exists a nonzero $v\in V_{\chi},$ such that $xv=yv=0,$ which implies that $VV_{\chi}=0$
(recall that $V=kx\oplus ky$).
Thus $H_{\chi}$ is an irreducible $\mf{sl}_2$-module.

 Let a character $\chi$ be in the smooth locus, thus by the previous theorem, it belongs to the Azumaya
 locus. Hence we only need to show that modules in the proposition are nonzero and have dimension $\leq p^2.$

Let us start with the case when $\chi(x^p)=0.$ It is clear that dim $(M_{\chi})=p^2$, so there is nothing to prove.
Next, we consider the case when  $\chi(y^p)=0.$ 
 Thus $M_{\chi}$ is spanned by elements of the form $e^ix^jh^lv.$ But we claim that $M_{\chi}=k[e, x]v.$ Indeed, since $t=ey^2+hxy-fx^2$ acts as a scalar, it follows that multiplication by $hxy$ preserves $k[e, x]v, $ so if $\chi(y^p)\neq 0$  then $hv\in k[e, x]v.$ If $\chi(y^p)=0,$ then we have $hxyex^{p-2}v=-hx^{p}v.$
Thus, $hv\in k[e,x]v,$ so $M_{\chi}=k[e, x]v,$ in particular dim$M_{\chi}\leq p^2.$ Now let us assume that 
$\chi(y^p)\neq 0.$ Then from the fact that $hxyv\in k[e, y]v$ we see that $hv\in k[e, y],$ hence 
$M_{\chi}=k[e, x]v.$
It is clear that an irreducible module over $H_{\chi}$ is a quotient of $M_{\chi},$ thus $V_{\chi}=M_{\chi}$ and we are done.

\end{proof}

\end{document}